\documentclass[10pt,a4paper]{amsart}

\usepackage{hyperref}
\usepackage{amsmath,amssymb,amsfonts}
\usepackage{mathtools}

\usepackage{stmaryrd}
\usepackage{mathrsfs}

\usepackage{todonotes}

\def\equationautorefname~#1\null{Equation~(#1)\null}

\usepackage[marginpar=2cm,ignoremp,margin=3cm]{geometry}

\usepackage{mathdots}
\usepackage{etoolbox}
\usepackage{shuffle}
\usepackage{thmtools}

\makeatletter
\def\@endtheorem{\endtrivlist}
\makeatother

\declaretheorem[
style=plain,
name=Theorem,
numbered=yes,
refname={Theorem,Theorems},
Refname={Theorem,Theorems}
]{Thm}
\declaretheorem[
style=plain,
name=Proposition,
numberlike=Thm,
refname={Proposition,Propositions},
Refname={Proposition,Propositions}
]{Prop}
\declaretheorem[
style=definition,
name=Remark,
numberlike=Thm,
refname={Remark,Remarks},
Refname={Remark,Remarks}
]{Rem}
\declaretheorem[
style=plain,
name=Conjecture,
numberlike=Thm,
refname={Conjecture,Conjectures},
Refname={Conjecture,Conjectures}
]{Conj}

\declaretheorem[
style=definition,
name=Example,
numberlike=Thm,
refname={Example,Examples},
Refname={Example,Examples}
]{Eg}
\declaretheorem[
style=plain,
name=Corollary,
numberlike=Thm,
refname={Corollary,Corollaries},
Refname={Corollary,Corollaries}
]{Cor}
\declaretheorem[
style=plain,
name=Lemma,
numberlike=Thm,
refname={Lemma,Lemmas},
Refname={Lemma,Lemmas }
]{Lem}

\DeclareMathOperator{\sech}{sech}

\newcommand{\poch}[2]{%
	{\left\{ #1 \right\}_{#2}}
}

\newcommand{\sgnarg}[2]{
	\left( \genfrac{}{}{0pt}{0}{#1}{#2} \right)
}

\newmuskip\pFqmuskip
\newcommand*\pFq[6][8]{%
	\begingroup 
	\pFqmuskip=#1mu\relax
	\mathchardef\normalcomma=\mathcode`,
	\mathcode`\,=\string"8000
	\begingroup\lccode`\~=`\,
	\lowercase{\endgroup\let~}\pFqcomma
	{}_{#2}F_{#3}{\left[\genfrac..{0pt}{}{#4}{#5};#6\right]}%
	\endgroup
}
\newcommand{\pFqcomma}{{\normalcomma}\mskip\pFqmuskip}

\let\eps\varepsilon

\let\overlineO\overline
\renewcommand{\overline}[1]{\overlineO{\mathclap{\phantom{I}}#1}}

\begin{document}
	
	\title[Differential operators, and a depth reduction for \protect\( \zeta(1, \ldots, 1, \overline{2m}) \protect\)]{Differential operators and a depth reduction for \\ the alternating multiple zeta values \( \zeta(1, \ldots, 1, \overline{2m}) \)}
	
	\author[Au]{Kam Cheong Au}
	\address{Rheinische Friedrich-Wilhelms-Universität Bonn \\ Mathematical Institute \\ 53115 Bonn, Germany} 
	\email{s6kmauuu@uni-bonn.de}
	
	\author[Charlton]{Steven Charlton}
	\address{Max Planck Institute for Mathematics \\ Vivatsgasse 7 \\ Bonn 53111 \\ Germany}
	\email{charlton@mpim-bonn.mpg.de}
	
	\author[Hoffman]{Michael E. Hoffman}
	\address{Department of Mathematics \\ U.S. Naval Academy \\ Annapolis \\ MD 21402 \\ USA}
	\email{meh@usna.edu}
	
	\makeatletter
	\@namedef{subjclassname@2020}{%
		\textup{2020} Mathematics Subject Classification}
	\makeatother
	
	\subjclass[2020]{Primary: 	11M32. Secondary: 33C05, 13N99, 44A10}
	
	\keywords{Alternating multiple zeta values, hypergeometric functions, differential operators, Laplace tranasform}

	\date{28 December 2023}
	
	\begin{abstract}
		We establish an identity amongst certain differential operators applied to a formal power-series.  As a corollary we obtain an explicit depth reduction result for alternating MZV's of the form \( \zeta(1,\ldots,1,\overline{2m}) \), which resolves a conjecture posed earlier by the third author.
	\end{abstract}	
	
	\maketitle
	
	\section{Introduction}
	
	The alternating multiple zeta values (MZV's) with signs \( \eps_i \in \{ \pm 1 \} \) and arguments \( k_i \in \mathbb{Z}_{>0} \) are defined by
	\[
	\zeta\sgnarg{\eps_1,\ldots,\eps_d}{k_1,\ldots,k_d} \coloneqq \sum_{0 < n_1 < \cdots < n_d} \frac{\eps_1^{n_1} \cdots \eps_d^{n_d}}{n_1^{k_1} \cdots n_d^{k_d}} \,.
	\]
	The series converges for \( (\eps_d,k_d) \neq (1,1) \).  Frequently the notation is simplified by writing the MZV with a single row of arguments, and decorating \( k_i \) with a bar, i.e. \( \overline{k_i} \), exactly when the corresponding \( \eps_i = -1 \).  For example
	\[
	\zeta(\overline{k_1}, k_2, \overline{k_3}, \overline{k_4}, k_5) = \zeta\sgnarg{-1, 1, -1, -1, 1}{k_1,k_2,k_3,k_4,k_5} \,.
	\]
	In this article we will be interested in alternating MZV's of the form \( \zeta(\{1\}^k, \overline{m}) \), where \( \{1\}^k \) denotes the string \( 1, \ldots, 1 \) with \( k \) repetitions. \medskip
	
	Recall that the Euler polynomials \( E_n(x) \) are defined by
	\begin{align*}
	\sum_{j=0}^\infty E_j(x) \frac{t^j}{j!} & \coloneqq \frac{2 e^{t x}}{1 + e^t} \,. \\
	&= 1 + \Big( x - \frac{1}{2} \Big) t  + \big( x^2 - x \big) \frac{t^2}{2!} + \Big( x^3 - \frac{3}{2} x^2 + \frac{1}{4} \Big) \frac{t^3}{3!} + \cdots \,.
	\end{align*}
	For a polynomial or formal power-series \( F(x) \), denote by \( [x^a] F(x) \) the coefficient of \( x^a \) in \( F(x) \).   \medskip
	
	The following conjecture was posed by the third author, during a talk \cite{Hoffman-him} given in January 2018 at the \emph{Periods in Number Theory, Algebraic Geometry and Physics} trimester, at the Hausdorff Research Institute for Mathematics, in Bonn. 
	\begin{Conj}[Hoffman, 2018, \cite{Hoffman-him}]\label{conj:z111evenbar}
		For integers \( k, m \geq 1 \), the alternating MZV \( \zeta(\{1\}^k, \overline{2m}) \) can be written as
		\[
		\zeta(\{1\}^k, \overline{2m}) = 
		\begin{aligned}[t] 
		& P(\zeta(2), \, \zeta(3), \zeta(5), \ldots, \zeta(2i+1),\ldots) \\
		& - \sum_{n=m+1}^{m+\lceil k/2 \rceil} \, [z^{2m-2}] E_{2n-3}(z) \cdot \zeta(\{1\}^{k+1-2(n-m)}, \overline{2n-1}) \,, \end{aligned}
		\]
		where \( P \in \mathbb{Q}[x_2 \,,\, x_3,x_5,\ldots,x_{2i+1},\ldots] \) is some (unspecified) rational polynomial.
	\end{Conj}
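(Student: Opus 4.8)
The plan is to pass to generating functions and reduce the conjecture to an identity between entire functions of $z$. First I would record the representation
\[
\tilde Z_s(z)\;:=\;\sum_{k\ge 0}\zeta(\{1\}^k,\overline{s})\,z^k
\;=\;\sum_{n\ge 1}\frac{(-1)^n}{n^s}\binom{z+n-1}{n-1}
\;=\;-\frac{1}{(s-1)!}\int_0^1\frac{(\log 1/t)^{s-1}}{(1+t)^{1+z}}\,dt,
\]
valid for $s\ge 2$: the first equality follows by interchanging the two summations and using $\prod_{j=1}^{n-1}(1+z/j)=\binom{z+n-1}{n-1}$, the second from $n^{-s}=\tfrac{1}{(s-1)!}\int_0^1(\log 1/t)^{s-1}t^{n-1}\,dt$ together with the negative binomial series. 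Writing $\int_0^1=\int_0^\infty-\int_1^\infty$ and substituting $t\mapsto 1/t$ on $[1,\infty)$ gives the Beta reflection
\[
\tilde Z_s(z)\;=\;\Pi_s(z)\;+\;\frac{1}{(s-1)!}\int_0^{1/2}\!\Big(\log\tfrac{v}{1-v}\Big)^{\!s-1}v^{z-1}\,dv,
\qquad
\Pi_s(z)=\frac{(-1)^s}{(s-1)!}\left.\partial_\sigma^{\,s-1}\frac{\Gamma(\sigma)\,\Gamma(z+1-\sigma)}{\Gamma(z+1)}\right|_{\sigma=1}.
\]
The substitution $t=e^{-x}$ presents $\tilde Z_s(z)$ as a Mellin transform of the kernel $K_z(x)=e^{-x}(1+e^{-x})^{-1-z}$; I expect this $K_z$, or the closely related Euler-polynomial generating function $\tfrac{2e^{xz}}{1+e^x}$, to be the ``formal power-series'' of the abstract.

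The differential-operator input is the one-line identity $\tfrac{d}{dx}K_z(x)=z\,K_z(x)-(1+z)\,K_{z+1}(x)$, which after integrating against $x^{s-1}$ and integrating by parts gives the three-term recursion
\[
\tilde Z_s(z)\;=\;-z\,\tilde Z_{s+1}(z)\;+\;(1+z)\,\tilde Z_{s+1}(z+1)\qquad(s\ge 1).
\]
Starting from $s=2m$ I would feed this recursion and the Beta reflection into one another: each application of the recursion peels off a \emph{pure} odd-bar generating function of strictly smaller depth (the term $-z\tilde Z_{2m+1}(z)$, and so on), while the shifted remainder $\tilde Z_{\bullet}(z+j)$ is rewritten, via the reflection, as a $\Gamma$-factor plus a fresh $\tfrac12$-integral to be processed further. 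Expanding $\tfrac{\Gamma(\sigma)\Gamma(z+1-\sigma)}{\Gamma(z+1)}$ through $\log\Gamma(1+w)=-\gamma w+\sum_{j\ge 2}\tfrac{(-1)^j}{j}\zeta(j)\,w^j$ shows that every $\Pi_s$ contributes only rational polynomials in $\zeta(2),\zeta(3),\zeta(4),\dots$, and each $\zeta(2k)$ is a rational multiple of $\zeta(2)^k$; this is the source of $P$. The Euler polynomials $E_{2n-3}$ should emerge as the structure constants of the resulting combination — morally because the recursion's coefficient pair $(-z,\,1+z)$ reproduces the defining relation $E_n(w)+E_n(w+1)=2w^n$, equivalently because $\tfrac{2}{1+e^t}=\sum_n E_n(0)\tfrac{t^n}{n!}$ surfaces whenever one passes from $(1+e^{-x})^{-1-z}$ to $(1+e^{-x})^{-1}$.

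To return to multiple zeta values, after $v=w/(1+w)$ the $\tfrac12$-integrals become $\int_0^1(\log w)^{\ell-1}w^{z-1}(1+w)^{-1-z}\,dw$, whose $z$-expansion is a $\mathbb{Q}$-linear combination of $\int_0^1\frac{(\log 1/w)^{a}(\log(1+w))^{b}}{1+w}\,dw$ and $\int_0^1\frac{(\log 1/w)^{a}(\log(1+w))^{b}}{w}\,dw$, that is, of the alternating MZVs $\zeta(\{1\}^b,\overline{a+1})$ and $\zeta(\{1\}^{b-1},\overline{a+2})$. Some of these still have an even last argument; as they have strictly smaller depth than $\zeta(\{1\}^k,\overline{2m})$, one removes them by induction on $k$, the base cases $k=0,1$ being $\zeta(\overline{2m})\in\mathbb{Q}\,\zeta(2)^m$ and the classical reducibility of $\zeta(1,\overline{2m})$. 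After these substitutions only the claimed odd-bar terms and the polynomial $P$ remain.

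I expect the main obstacle to be precisely this bookkeeping. One must verify that, after all cancellations: (i) every term carrying a positive power of $\log 2$ disappears, so that $P$ really lies in $\mathbb{Q}[\zeta(2),\zeta(3),\zeta(5),\dots]$ and not just in the larger algebra of alternating MZVs; (ii) the spurious $z$-poles contributed separately by $\Pi_s$ and by the $\tfrac12$-integral cancel; and (iii) the rational coefficient of $\zeta(\{1\}^{k+1-2(n-m)},\overline{2n-1})$ is exactly $[z^{2m-2}]E_{2n-3}(z)$, and not merely a numerically equal rational. For (iii) I would pass to the two-variable generating function $\mathcal Z(y,z):=\sum_{s\ge 1}y^{s-1}\tilde Z_s(z)=-\int_0^1 t^{-y}(1+t)^{-1-z}\,dt$, which enjoys the clean reflection $\mathcal Z(y,z)+\mathcal Z(1-y-z,z)=-\Gamma(1-y)\Gamma(z+y)/\Gamma(z+1)$, and extract the Euler coefficients directly from the $\Gamma$-factor; (i) and (ii) should then follow from the induction together with the observation that the original integrals are manifestly pole-free for $\operatorname{Re}(z)>0$.
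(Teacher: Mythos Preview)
Your ingredients are all correct: the series representation of \(\tilde Z_s(z)\), the integral form, the Beta reflection \(\mathcal Z(y,z)+\mathcal Z(1-y-z,z)=-\Gamma(1-y)\Gamma(z+y)/\Gamma(z+1)\), and the recursion \(\tilde Z_s(z)=-z\,\tilde Z_{s+1}(z)+(1+z)\,\tilde Z_{s+1}(z+1)\) all check out. But the proposal has a genuine gap precisely at the point you flag as ``(iii)'', and that point is the entire content of the conjecture.

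The conjecture does not merely assert that \(\zeta(\{1\}^k,\overline{2m})\) reduces to odd-bar values plus single zetas; it asserts that the \emph{specific} combination
\[
\zeta(\{1\}^k,\overline{2m})+\sum_{n=m+1}^{m+\lceil k/2\rceil}[z^{2m-2}]E_{2n-3}(z)\,\zeta(\{1\}^{k+1-2(n-m)},\overline{2n-1})
\]
lies in \(\mathbb Q[\zeta(2),\zeta(3),\zeta(5),\dots]\). Your recursion, after extracting \([z^k]\), expresses \(\zeta(\{1\}^k,\overline{2m})\) as a \emph{binomial}-weighted combination of \(\zeta(\{1\}^j,\overline{2m+1})\) for \(0\le j\le k\) coming from the Taylor expansion of \(\tilde Z_{2m+1}(z+1)\). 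That is a different linear combination, involving only the single odd value \(2m+1\) rather than the range \(2m+1,2m+3,\dots\). To reach the conjecture you must iterate; but each iteration raises \(s\), so termination relies on your reflection step, and the interaction of the shift \(z\mapsto z+1\), the reflection, and the induction on \(k\) is exactly what would have to conspire to produce the Euler coefficients. The sentence ``morally because the recursion's coefficient pair \((-z,\,1+z)\) reproduces \(E_n(w)+E_n(w+1)=2w^n\)'' is suggestive but is not a derivation: the recursion acts in \(s\), the Euler relation is in the argument of \(E_n\), and nothing in the plan makes the bridge. Likewise, ``extract the Euler coefficients directly from the \(\Gamma\)-factor'' in the two-variable reflection is not carried out; the \(\Gamma\)-factor expands into single zetas, not into Euler polynomials, so the Euler structure must come from how you \emph{combine} reflected pieces, and that combination is never written down. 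Your induction on \(k\) compounds the problem: substituting the inductive hypothesis for the residual even-bar values would layer Euler coefficients on top of whatever the recursion already produced, and you give no argument that the result collapses to a \emph{single} Euler coefficient per term.

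For comparison, the paper's route is quite different and sidesteps this bookkeeping entirely. It first \emph{defines} the Euler combination via the operator \(\dfrac{2s}{1+\exp(x\partial_b)}\big|_{b=s}\), which by construction sends \((b/x)^N\mapsto E_N(s/x)\); this immediately packages the left-hand side of the conjecture into a single generating series (Proposition~\ref{prop:gs}). It then proves a universal operator identity (Theorem~\ref{thm:main}) valid for \emph{any} function \(f(a,b)\), expressing that same operator combination via three explicit operators \(\mathcal D_1,\mathcal D_2,\mathcal D_3\) acting on the symmetrized input \(b\,f(a+b,a)+a\,f(a+b,b)\). Specializing to \(f={}_2F_1[a,b;1+b;-1]\) and invoking a closed-form \({}_2F_1\) identity collapsing that symmetrization to \(\Gamma(1+a)\Gamma(1+b)/\Gamma(a+b)\) finishes the proof. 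The Euler polynomials are thus built into the \emph{question} rather than extracted from the \emph{answer}, which is exactly the step your plan is missing.
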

	
	This conjecture originated as an intermediate step in a attempt to reduce a certain integral to classical multiple zeta values; in \cite{Hoffman-integral} these integrals were directly reduced to classical multiple zeta values, bypassing the conjecture completely.  The conjecture itself, however, is not a consequence of this alternative proof; the proof only implies the (expected-to-be) irreducible terms in certain linear combinations of alternating MZV's vanish.  One is therefore still interested in giving a proof of this conjecture, which will be the focus of this article.
	
	\begin{Rem}
		By the depth-parity theorem for alternating multiple zeta values (or more generally multiple polylogarithms cf. \cite{Goncharov,Panzer}), one knows that an alternating MZV of weight \( w \), depth \( d \) with \( w \not\equiv d \pmod{2} \) reduces to lower depth.  Since \( \zeta(\{1\}^k, \overline{2m}) \) has weight \( w = k + 2m \), and depth \( d = k + 1 \), the depth-parity theorem implies \( \zeta(\{1\}^k, \overline{2m}) \) always reduces to lower depth, i.e. depth \( \leq k \).  Our proof of \autoref{conj:z111evenbar} therefore gives an explicit depth-parity result for \( \zeta(\{1\}^k, \overline{2m}) \).
		
		Unfortunately, both of these parity theorems \cite{Goncharov,Panzer} are hard to apply directly to our situation.  
		In the case of Panzer \cite{Panzer}, only the depth 2 and 3 cases of the parity theorem are stated explicitly.  The higher depth cases are constructed recursively via an algorithm presented therein.
		
		Whereas in the case of Goncharov \cite{Goncharov} we would actually obtain a reduction for the regularised expression \( \zeta(\{1\}^k, \overline{2m}) + (-1)^k \zeta(\overline{2m},\{1\}^k) \).  One must then somehow use the stuffle-product to write 
		\[
		\zeta(\overline{2m},\{1\}^k) \equiv (-1)^k \zeta(\{1\}^k,\overline{2m}) \pmod{\text{products, depth $\leq k$}} \,;
		\]
		This ends up leading to the question of whether
		\[
		\sum\nolimits_{i=0}^k \zeta(\{1\}^i,2,\{1\}^{k-i},\overline{m}) + \zeta(\{1\}^{k+1}, \overline{m},\overline{1}) \overset{?}{\in} \mathbb{Q}[\zeta(2), \ldots, \zeta(1,\ldots,1,\overline{2m+1}),\ldots] \,,
		\]
		which seems not straightforward to answer.  (Our parity theorem implies this answer is affirmative, however.)
	\end{Rem}
	
	We give a proof of \autoref{conj:z111evenbar} by explicitly showing a certain identity amongst derivatives of \( {}_2F_1 \) functions, arising from the application of formal differential operators.  On one hand this identity gives a generating series for the depth \( {>}1 \) part of the above conjecture, whilst on the other hand it is expressed via combinations which reduce to Gamma functions and derivatives thereof.  This will give an explicit evaluation for the depth \( {>}1 \) of \autoref{conj:z111evenbar} via polynomials in single zeta-values, and thus refining and proving the conjecture. \medskip 
	
	Recall the Laplace transform of a (sufficiently nice) function \( f(y) \) is defined by
	\[
	\mathscr{L}_y\{f\}(s) \coloneqq \int_0^\infty f(y) \exp(-s y) \mathrm{d}y \,.
	\]
	(The function \( f \) may also depend on other variables).  One key property of the Laplace transform is that it converts exponential generating series into ordinary generating series, as follows.  A direct computation shows
	\[
	\mathscr{L}_y\{y^n\}(s) = \frac{n!}{s^{n+1}} \,,
	\]
	so when one takes the Laplace transform of the exponential generating series of a sequence \( \lambda_n \) one obtains (convergences issues aside),
	\[
	\mathscr{L}_y\bigg\{ \sum_{n=0}^\infty \frac{\lambda_n}{n!} y^n \bigg\}(s) = \frac{1}{s} \cdot \sum_{n=0}^\infty \frac{\lambda_n}{s^n} \,.
	\]
	This is nothing other than the ordinary generating series the sequence \( \lambda_n \), evaluated at \( s^{-1} \), up to a prefactor.  We will extend the Laplace transform \( \mathscr{L}_y\{\bullet\}(s)  \) coefficient-wise to a formal power-series in a new variable \( x \), whose coefficients are \emph{polynomials} in \( y \).
	
	More precisely, consider a formal power-series \( f(y,\partial_a,\partial_b; x) \in \mathbb{Q}[y, \partial_a, \partial_b]\llbracket x \rrbracket \) in the variable \( x \), whose coefficients are polynomials in \( y \) and the differentials \( \partial_a, \partial_b \).  (For emphasis, we separate the power-series variable from the polynomials variables notationally.)  Write
	\[
	f(y,\partial_a,\partial_b; x) = \sum_{n=0}^\infty \lambda_n(y, \partial_a, \partial_b) x^n \,,
	\]
	then define the formal Laplace transform (with respect to \( y \)) as follows:
	\[
	\mathscr{L}_y\{f\}(s) = \sum_{n=0}^\infty \mathscr{L}_y\big\{ \lambda_n(y, \partial_a, \partial_b) \big\}(s) x^n \in \mathbb{Q}[s^{-1}, \partial_a, \partial_b]\llbracket x \rrbracket \,.
	\]
	Now introduce the following differential operators (the functions inside \( \mathscr{L}_y\{\bullet\}(s) \) are elements of \( \mathbb{Q}[y,\partial_a,\partial_b]\llbracket x \rrbracket \), as is straightforward to check):
	\begin{align}
	\label{eqn:d1}	\mathcal{D}_1 & \coloneqq s \cdot \mathscr{L}_y\bigg\{
	\begin{aligned}[t]
	& \frac{y}{2}-\frac{1}{2} (\partial_a-\partial_b) \tanh \Big(\frac{x y}{2}\Big)-\frac{1}{4} x y (\partial_a-\partial_b) \sech^2\Big(\frac{x y}{2}\Big) \\
	& +\frac{1}{2} y \sech\Big(\frac{x y}{2}\Big) \sinh \Big(\frac{1}{2} x (\partial_a-\partial_b)\Big) \sech\Big(\frac{1}{2} x (\partial_a-\partial_b+y)\Big) \\
	& -\frac{1}{x} \sinh \Big(\frac{x y}{2}\Big) \exp\Big(\frac{1}{2} x (\partial_a+\partial_b)\Big) \sech\Big(\frac{1}{2} x (\partial_a-\partial_b+y)\Big)  \bigg\}(s) \Big\rvert_{a=-s,b=s} \,,
	\end{aligned} \\
	\label{eqn:d2}	\mathcal{D}_2 & \coloneqq s \cdot \mathscr{L}_y\bigg\{ 
	\tanh\Big( \frac{x(y - \partial_b)}{2} \Big)\bigg\}(s) \Big\rvert_{b=s} \,, \\
	\label{eqn:d3}	\mathcal{D}_3 & \coloneqq s \cdot \mathscr{L}_y\bigg\{ -\frac{1}{4} x \Big( -2 + x y \tanh \Big(\frac{xy}{2}\Big)\Big) \sech^2\Big(\frac{x	y}{2}\Big) \bigg\}(s) \Big\rvert_{b=s} \,.
	\end{align}
	We understand the action of a differential operator \( \mathcal{E} = \sum_{n=0}^\infty \mu_n(s, \partial_a,\partial_b) x^n \in \mathbb{Q}[s^{-1},\partial_a, \partial_b]\llbracket x \rrbracket \) on some function \( g = g(a,b,x) \) to be the formal power-series
	\[
	\mathcal{E} \circ g = \sum_{n=0}^\infty ([x^n]\mathcal{E} \, \circ \, g) x^n \,,
	\]
	where \( [x^n]\mathcal{E} = \mu_n(s, \partial_a,\partial_b) \) is a polynomial in \( s,\partial_a, \partial_b \), so already has a well-defined action.  Likewise, when the operator contains a substitution \( \big\rvert_{a=\bullet,b=\bullet} \). \medskip
	
	\autoref{conj:z111evenbar} then follows from the following theorem, which is a special case of the main theorem of this article, \autoref{thm:main} below.  (Although \( \mathcal{D}_3 \) plays a trivial role in this special case, we retain it for consistency and comparison with the more general theorem.)
	\begin{Thm}[\autoref{cor:z11evbar} below]\label{thm:z11evbar}
		The following generating series identity holds as a formal power-series in \( x \),
		\begin{align*}
		& \sum_{k=0}^\infty \sum_{m=1}^\infty \Big\{ \zeta(\{1\}^k, \overline{2m}) + \sum_{n=m+1}^{m+\lceil k/2 \rceil} [z^{2m-2}] E_{2n-3}(z) \zeta(\{1\}^{k+1-2(n-m)}, \overline{2n-1})	\Big\} x^{k+1} s^{2m-1} \\
		& = \mathcal{D}_1 \circ \Big( \frac{\Gamma(1+a)\Gamma(1+b)}{\Gamma(a+b)}  \Big)  +  \mathcal{D}_2 \circ 1 
		+ \underbrace{\mathcal{D}_3 \circ 0}_{=0}  \,,
		\end{align*}
		where \( \Gamma \) is the Gamma function.
	\end{Thm}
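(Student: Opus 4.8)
The identity is the specialisation of the general identity of \autoref{thm:main} to the parameters relevant for \( \zeta(\{1\}^k,\overline{2m}) \), so the plan is to establish that general identity and to read off this case; here I sketch the argument for the case at hand. The strategy is to put both sides into closed form as formal power series in \( x \) whose coefficients are power series in \( s \) over \( \mathbb{Q}[\zeta(2),\zeta(3),\zeta(5),\dots] \), and to compare.

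\emph{The left-hand side.} I would begin from the classical generating series for the values \( \zeta(\{1\}^k,\overline{m}) \). Summing the inner indices of \( \zeta(\{1\}^k,\overline{m})=\sum_{0<n_1<\cdots<n_{k+1}}(-1)^{n_{k+1}}/(n_1\cdots n_k\,n_{k+1}^m) \) with the help of \( \prod_{j=1}^{N-1}(1+x/j)=\Gamma(N+x)/(\Gamma(N)\Gamma(1+x)) \) and \( \sum_{m\ge1}y^{m-1}/N^m=1/(N-y) \), and rewriting \( 1/(N-y) \) as a ratio of Pochhammer symbols, yields
\[
\sum_{k\ge0}\sum_{m\ge1}\zeta(\{1\}^k,\overline{m})\,x^k y^{m-1}
=-\frac{1}{1-y}\,{}_2F_1\bigl(1+x,\,1-y;\,2-y;\,-1\bigr).
\]
The odd-in-\( y \) part of this series carries the \( \zeta(\{1\}^k,\overline{2m}) \) (in \( y^{2m-1} \)), and the even-in-\( y \) part carries the odd-last-argument values \( \zeta(\{1\}^a,\overline{2n-1}) \) that occur inside the correction sum; the weights \( [z^{2m-2}]E_{2n-3}(z) \) are packaged through the exponential generating series \( \sum_j E_j(z)t^j/j!=e^{t(z-1/2)}\sech(t/2) \), which is exactly the \( \sech \)-type factor occurring in \( \mathcal D_1 \). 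After this bookkeeping the entire left-hand side is an explicit finite combination of the functions \( {}_2F_1(1+x,\bullet;\bullet;-1) \) and their \( y \)-derivatives, at parameters determined by \( x \) and \( s \).

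\emph{The right-hand side.} I would evaluate \( \mathcal D_1\circ\bigl(\Gamma(1+a)\Gamma(1+b)/\Gamma(a+b)\bigr) \), \( \mathcal D_2\circ1 \) and \( \mathcal D_3\circ0=0 \) directly from \eqref{eqn:d1}--\eqref{eqn:d3}: expand the inner functions of \( \mathcal D_1,\mathcal D_2 \) as power series in \( x \) with polynomial-in-\( y \) coefficients; apply \( \mathscr{L}_y \) coefficientwise via \( \mathscr{L}_y\{y^n\}(s)=n!/s^{n+1} \) (this converts the exponential generating series encoded by \( \tanh,\sech,\sinh,\exp \) into ordinary generating series in \( 1/s \)); multiply by \( s \); apply the resulting polynomial in \( \partial_a,\partial_b \) to \( \Gamma(1+a)\Gamma(1+b)/\Gamma(a+b) \) (resp.\ to \( 1 \)); and finally substitute \( a=-s \), \( b=s \). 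Here one exploits that \( \Gamma(1+a)\Gamma(1+b)/\Gamma(a+b)=(a+b)\,\Gamma(1+a)\Gamma(1+b)/\Gamma(1+a+b) \), so that every term built purely from \( \partial_a-\partial_b \) carries a factor \( a+b \) and drops out at \( a=-s,b=s \), while the shift operator \( \exp\bigl(\tfrac12 x(\partial_a+\partial_b)\bigr) \) turns \( a+b \) into \( a+b+x \) and so survives; combined with the expansion of \( \sech \) as the Euler-number generating series, the repeated differentiation produces shifted \( \Gamma \)-quotients. Their Taylor expansions — via \( \log\Gamma(1+t)=-\gamma t+\sum_{n\ge2}\tfrac{(-1)^n\zeta(n)}{n}t^n \), the \( \gamma \)-terms cancelling — are polynomials in single zeta values, and they match, after the classical \( {}_2F_1 \) summation theorems (of Gauss at \( 1 \) and Kummer at \( -1 \)) together with the relevant contiguous relations, the combination of \( {}_2F_1(1+x,\bullet;\bullet;-1) \) from the left-hand side. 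The substitution \( a=-s,b=s \) is what turns the negative powers of \( s \) left behind by \( \mathscr{L}_y \) into the positive powers \( s^{2m-1} \) of the statement, and \( \mathcal D_2\circ1 \), \( \mathcal D_3\circ0 \) account for the remaining depth-reduction and the (vanishing) degenerate contributions.

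\emph{The main obstacle.} The crux is this last reduction: proving that the Laplace-transformed hyperbolic operators, applied to \( \Gamma(1+a)\Gamma(1+b)/\Gamma(a+b) \), telescope to precisely the hypergeometric combination produced by the multiple-zeta side. This is the content of \autoref{thm:main}, whose proof is a long but in-principle elementary computation; the delicate points are (i) recognising that the \( \tanh/\sech/\sech^2 \) structure of the \( \mathcal D_i \) is engineered so that the exponential-to-ordinary passage under \( \mathscr{L}_y \) manufactures exactly the Euler-number and Euler-polynomial data needed for the hypergeometric evaluation, (ii) the combinatorial bookkeeping of the shift \( x^{k+1} \) versus \( x^k \), of the summation range up to \( m+\lceil k/2\rceil \), and of the even/odd-in-\( y \) projections, and (iii) tracking the substitution \( a=-s,b=s \) through the iterated differentiations — in particular checking that the Euler-polynomial correction terms appear with exactly the coefficients required for the cancellation, which is the "parity obstruction" made explicit. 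Once the operator identity is in hand, \autoref{conj:z111evenbar} follows immediately, since the right-hand side is built from derivatives of \( (a+b)\exp\bigl(\sum_{n\ge2}\tfrac{(-1)^n\zeta(n)}{n}(a^n+b^n-(a+b)^n)\bigr) \) and is therefore visibly a polynomial in single zeta values.
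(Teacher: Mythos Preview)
Your high-level plan --- specialise \autoref{thm:main} and identify the left-hand side with a ${}_2F_1$ generating series --- is the paper's plan. But the proposal leaves unfilled exactly the two steps that carry the content. For the specialisation: the paper sets $f(a,b)={}_2F_1(a,b;1+b;-1)$ in \autoref{thm:main}, and the reason $\mathcal D_1$ ends up applied to $\Gamma(1+a)\Gamma(1+b)/\Gamma(a+b)$ is the identity
\[
b\,{}_2F_1\!\bigl(a{+}b,a;1{+}a;-1\bigr)+a\,{}_2F_1\!\bigl(a{+}b,b;1{+}b;-1\bigr)=\frac{\Gamma(1+a)\Gamma(1+b)}{\Gamma(a+b)}\,.
\]
This is neither Gauss at $z=1$ nor Kummer at $z=-1$ (the parameters fit neither pattern); the paper proves it by Pfaff's transformation followed by the Euler integral representation, after which the two integrals splice into a single beta integral. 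You do not isolate this lemma, and ``Kummer plus contiguous relations'' would not produce it without essentially rediscovering that argument. On the left-hand side, the paper packages the Euler-polynomial weights through the operator $\tfrac{2s}{1+\exp(x\partial_b)}\big|_{b=s}$, using $\tfrac{2}{1+\exp(x\partial_b)}\circ(b/x)^N=E_N(s/x)$, which puts the multiple-zeta side into exactly the shape of the left-hand side of \autoref{thm:main}. Your odd/even-in-$y$ bookkeeping is aiming at the same object but does not name this operator, so the match with \autoref{thm:main} is not visible in your sketch.

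More seriously, you defer the proof of \autoref{thm:main} itself to ``a long but in-principle elementary computation'', and your hints (expand the hyperbolic kernels, Laplace-transform term by term, differentiate the Gamma quotient, substitute $a=-s,\,b=s$, and hope it telescopes) miss the paper's organising idea. The paper precomposes both sides with the inverse operator $\mathcal L_1=(1+\exp(x\partial_s))\,\tfrac{1}{2s}\big|_{s=b}$. This kills the awkward $\tfrac{2s}{1+\exp(x\partial_b)}$ on the left, and --- via the translation law $\exp(x\partial_s)\,\mathscr L_y\{F\}(s)=\mathscr L_y\{F\,e^{-xy}\}(s)$ together with its analogues for the substitutions $a=-s$, $b=s$ --- collapses each $\mathcal L_1\circ\mathcal D_i$ to a short rational/hyperbolic expression acting on finitely many translates $f(0,\pm b)$, $f(0,\pm(b{+}x))$, $f(x,-b)$, $f(x,b{+}x)$. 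The identity is then checked by a few lines of cancellation. Without the $\mathcal L_1$ trick your direct expansion amounts to matching two infinite series in $x$ coefficient by coefficient, and nothing in your sketch explains why that match should occur; in particular, your remark that ``every term built purely from $\partial_a-\partial_b$ carries a factor $a+b$ and drops out at $a=-s,\,b=s$'' is false as stated, since $(\partial_a-\partial_b)\bigl((a{+}b)G(a,b)\bigr)$ generally does not vanish at $a+b=0$.
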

	From \autoref{thm:z11evbar}, we can extract the coefficient of \( x^{k+1} \) for any \( k \), in order to find explicit results.  
	\begin{Eg}[Expression for \( \zeta(1, 1, \overline{2m}) \)]
		We extract the coefficient of \( x^{k+1} \), \( k = 2 \) in \autoref{thm:z11evbar}.  We note that
		\[
		[x^3] \mathcal{D}_1 = 
		\frac{\partial _a \partial _b}{4 s^2}
		-\frac{\partial _a \left(\partial _b\right){}^2}{8 s}
		+\frac{\partial_a}{4 s^3}-\frac{\left(\partial_b\right){}^2}{4s^2}
		+\frac{\left(\partial _b\right){}^3}{24 s} \Big\rvert_{a=-s,b=s} \,.
		\]
		We therefore compute (via the limit \( \eps \to 0 \), as \( a + b = 0 \) is a removable singularity), that
		\begin{align*}
		& \lim_{\eps\to0} \bigg( \frac{\partial _a \partial _b}{4 s^2}-\frac{\partial _a \left(\partial _b\right){}^2}{8 s}+\frac{\partial _a}{4 s^3}-\frac{\left(\partial
			_b\right){}^2}{4 s^2}+\frac{\left(\partial _b\right){}^3}{24 s} \bigg) \frac{\Gamma(1+a)\Gamma(1+b)}{\Gamma(a+b)} \Big\rvert_{a=-s+\eps,b=s} \\
		& = \begin{aligned}[t] 
		& \frac{\pi  \csc (\pi  s)}{4 s^2}+\frac{1}{24} \pi ^3 \csc (\pi  s)+\frac{\pi  \csc (\pi  s) (\psi(1-s)+\gamma )}{4 s} \\
		& -\frac{1}{4}
		\pi  \csc (\pi  s) (\psi(1-s)+\gamma ) (\psi(1+s)+\gamma )-\frac{\pi  \csc (\pi  s) (\psi(1+s)+\gamma )}{4 s} \,.
		\end{aligned}
		\end{align*}
		Here \( \psi(s) = \frac{\mathrm{d}}{\mathrm{d}s} \log\Gamma(s) \) is the digamma function, with Taylor series
		\[
		\psi(1+s) = \frac{\mathrm{d}}{\mathrm{d}s} \log\Gamma(1+s) = -\gamma + \sum_{n=2}^\infty  (-1)^n s^{n-1} \zeta(n) \,.
		\]
		We also have that \( [x^{k+1}] \mathcal{D}_2 \circ 1  = -\frac{E_{k+1}(0)}{s^{k+1}} \), so for \( k= 2 \) we obtain
		\(
		[x^{3}] \mathcal{D}_2 \circ 1 = -\frac{1}{4s^3} \,.
		\)	
		So overall, we have the identity
		\begin{align*}
		&  \sum_{m=1}^\infty \Big\{ \zeta(1,1, \overline{2m}) - \frac{2m-1}{2} \zeta(1, \overline{2m+1})	\Big\}  s^{2m-1}  \\[-0.5ex]
		& = \begin{aligned}[t]  
		&  -\frac{1}{4s^3} + \frac{\pi  \csc (\pi  s)}{4 s^2}+\frac{1}{24} \pi ^3 \csc (\pi  s)+\frac{\pi  \csc (\pi  s) (\psi (1-s)+\gamma )}{4 s} \\
		& -\frac{1}{4}
		\pi  \csc (\pi  s) (\psi(1-s)+\gamma ) (\psi(1+s)+\gamma )-\frac{\pi  \csc (\pi  s) (\psi(1+s)+\gamma )}{4 s} \,.
		\end{aligned}
		\end{align*}
		
		By comparing this with the explicit form conjectured in \cite{Hoffman-him}, namely for some \( r_m \overset{?}{\in} \mathbb{Q} \),
		\[
		\zeta(1,1,\overline{2m}) \overset{?}{=} \begin{aligned}[t]
		&  \frac{2m-1}{2} \zeta(1,\overline{2m+1}) + r_m \zeta(2m+2) - \frac{1}{2} \sum_{j=2}^{2m} \hat{\zeta}(j)\hat{\zeta}(2m+2-j) \\[-1ex]
		& + \frac{1}{6} \sum_{\substack{i+j+k = 2m+2 \\ i,j,k\geq2}} \hat{\zeta}(i)\hat{\zeta}(j)\hat{\zeta}(k)
		\,,
		\end{aligned}
		\]
		where \( \hat{\zeta}(i) = \zeta(i) \), if \( i \) odd, and \( = \zeta(\overline{\,i\,}) \), if \( i \) even, we can give an expression for the (heretofore conjecturally) rational coefficients \( r_m \).  Sum the above conjecture to a generating series, and subtract it from the proven identity above.  We are left with the result that \( \sum_m r_m \zeta(2m+2) t^{2m-1} \) is expressed via a certain trigonometric series.  Explicitly we find
		\begin{align*}
		\sum_{m=0}^\infty r_m \zeta(2m+2) t^{2m-1} 
		&= -\frac{5}{24 t^3}+\frac{1}{12} \pi ^3 \csc ^3(\pi 
		t)-\frac{1}{48} \pi ^3 \csc (\pi  t)+\frac{\pi ^2 \cot (\pi
			t) \csc (\pi  t)}{8 t}
		\\
		&= \frac{3}{32} \zeta (4) t+\frac{151}{192} \zeta (6)
		t^3+\frac{3287 \zeta (8) t^5}{1536}+\frac{10629 \zeta (10)
			t^7}{2560} + \cdots \,,
		\end{align*}
		which establishes the rationality of the \( r_m \).
	\end{Eg}
	
	The outline for the rest of the paper is as follows.  In \autoref{sec:state} we will state a strengthened result \autoref{thm:main}, and show this implies \autoref{thm:z11evbar} as a corollary.  We give the proof of \autoref{thm:main} in \autoref{sec:proof} using the theory of differential operators.\medskip

	\paragraph{\bf Acknowledgements:}  SC would like to thank the Max Planck Institute for Mathematics, in Bonn, for their support, hospitality and excellent working conditions.  SC and MEH are -- belatedly -- very grateful to the Hausdorff Research Institute for Mathematics, and to the organisers of the \emph{Periods in Number Theory, Algebraic Geometry and Physics} trimester program, for the lively discussion and collaboration environment, from which this paper eventually grew.
	
	\section{Statement of the main theorem, and its consequences}\label{sec:state}
	
	Recall the differential operators \( \mathcal{D}_1, \mathcal{D}_2, \mathcal{D}_3 \) from Equations \eqref{eqn:d1}--\eqref{eqn:d3}.  The main theorem of this article is as follows.
	\begin{Thm}\label{thm:main}
		For any (continuously differentiable) function \( f(a,b) \), the following identity holds when it is viewed as a formal power-series in \( x \),
		\begin{align*}
		& ( f(x, s) - f(0, s)) \\
		& + \Big( \frac{2s}{1 + \exp(x \partial_b)} \Big\rvert_{b=s} \Big) \circ \bigg( {-}\frac{ f(x, -b) + f(x, b)}{2b} + \frac{ f(0, -b) + f(0, b) }{2b} \bigg) \\[2ex]
		& = \begin{aligned}[t] 
		& \mathcal{D}_1 \circ \big(b f(a+b, a) + a f(a+b, b)\big) \\
		& + \mathcal{D}_2 \circ f(0, b) \quad + \quad \mathcal{D}_3 \circ \big( \partial_b ( f(0, b) - f(0, -b) ) \big) \,. \end{aligned}
		\end{align*}
	\end{Thm}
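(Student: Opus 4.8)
The plan is to reduce to a spanning family of test functions \( f \) and then compute both sides explicitly. First, observe that both sides of the asserted identity are \( \mathbb{Q} \)-linear in \( f \), and that — reading the identity coefficient-wise in \( x \), as one must — the coefficient of any fixed power \( x^{N} \) involves only finitely many partial derivatives of \( f \), all evaluated at the two points \( (0,s) \) and \( (0,-s) \). On the left this is because \( f(x,\,\cdot\,) \) is to be expanded as its Taylor series in the first argument about \( 0 \), and the operator \( \frac{2s}{1+\exp(x\partial_b)}\rvert_{b=s} \) equals \( s\sum_{j\geq 0}\frac{E_j(0)}{j!}x^{j}\partial_b^{j}\rvert_{b=s} \) via the generating series \( \frac{2}{1+e^{t}} = \sum_{j}\frac{E_j(0)}{j!}t^{j} \); on the right it is because each \( [x^{n}]\mathcal{D}_i \) is, by construction, a polynomial differential operator in \( \partial_a,\partial_b \) of bounded order with coefficients in \( \mathbb{Q}[s^{-1}] \), whose action on \( bf(a+b,a)+af(a+b,b) \), on \( f(0,b) \), and on \( \partial_b(f(0,b)-f(0,-b)) \) becomes, after \( a=-s,\ b=s \), an evaluation of finitely many derivatives of \( f \) at \( (0,-s) \) and \( (0,s) \). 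It therefore suffices to verify the identity with \( f \) ranging over a spanning family, and I would take \( f(a,b) = e^{ua+vb} \) with \( u,v \) formal parameters (this recovers all monomials \( a^{i}b^{j} \) by extracting Taylor coefficients in \( u,v \)).

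For the left-hand side with \( f = e^{ua+vb} \), one has \( f(x,s) - f(0,s) = e^{vs}(e^{ux}-1) \), and the symmetrised argument collapses neatly, \( -\frac{f(x,-b)+f(x,b)}{2b}+\frac{f(0,-b)+f(0,b)}{2b} = \frac{\cosh(vb)}{b}(1-e^{ux}) \). Applying \( \frac{2s}{1+\exp(x\partial_b)}\rvert_{b=s} \) — in the Euler-polynomial form above, or formally as the shift operator \( \frac{2}{1+e^{x\partial_b}}h(b) = 2\sum_{k\geq 0}(-1)^{k}h(b+kx) \) — yields a closed expression \( \Lambda(u,v,s,x) \) in elementary functions. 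For the right-hand side I would exploit the fact that \( \partial_a,\partial_b \) commute both with \( y \) and with \( \mathscr{L}_y\{\,\cdot\,\}(s) \): consequently the \emph{symbol} of \( \mathcal{D}_1 \) on a pure exponential of weights \( (\alpha,\beta) \) is obtained by substituting \( \partial_a\mapsto\alpha,\ \partial_b\mapsto\beta \) in the expression of \eqref{eqn:d1} and only then applying \( s\mathscr{L}_y\{\,\cdot\,\}(s) \); the prefactors \( b \) and \( a \) in \( bf(a+b,a)+af(a+b,b) \) are handled by the standard device \( b\,e^{(u+v)a+ub} = \partial_\beta e^{(u+v)a+\beta b}\rvert_{\beta=u} \) (and symmetrically), which merely adds a derivative-of-symbol correction reflecting \( [\partial_b,b]=1 \). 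After setting \( a=-s,\ b=s \), each \( \mathcal{D}_i \)-term becomes an explicit formal Laplace transform in \( y \) of an elementary function — a product of \( \tanh,\sech,\sinh,\exp \) with arguments linear in \( u,v,s,x,y \) — which I would evaluate using \( s\mathscr{L}_y\{y^{n}\}(s) = n!/s^{n} \) after expanding the \( \sech \)'s and \( \tanh \)'s as (alternating) geometric series in \( e^{\pm xy} \), so that only the elementary transforms \( \mathscr{L}_y\{y^{a}e^{-\beta y}\}(s) = a!/(s+\beta)^{a+1} \) occur. Call the assembled result \( \mathrm{P}(u,v,s,x) \).

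It then remains to prove \( \Lambda = \mathrm{P} \), and this bookkeeping is where I expect the real work to be. A few simplifications help organise it: the second and third summands of the \( \mathcal{D}_1 \)-integrand combine, since \( \tanh(\tfrac{xy}{2})+\tfrac{xy}{2}\sech^{2}(\tfrac{xy}{2}) = \frac{d}{dy}\!\big[y\tanh(\tfrac{xy}{2})\big] \), so \( \mathscr{L}_y \) of that piece telescopes through \( \mathscr{L}_y\{g'\}(s) = s\mathscr{L}_y\{g\}(s) - g(0) \) — which is also the mechanism generating the purely polynomial-in-\( s^{-1} \) contributions, compare \( [x^{k+1}]\mathcal{D}_2\circ 1 = -E_{k+1}(0)/s^{k+1} \); and the two \( (\partial_a-\partial_b) \)-hyperbolic summands of \eqref{eqn:d1} should, after \( a=-s,\ b=s \), recombine by hyperbolic addition formulae into precisely the Euler-operator term \( \frac{2s}{1+\exp(x\partial_b)}\rvert_{b=s}\circ(\cdots) \) on the left, because a ratio of \( \sech,\sinh,\exp \) with the shifted argument \( \tfrac{x(\partial_a-\partial_b+y)}{2} \) is, up to elementary factors, an Euler-polynomial-type generating series \( \frac{e^{\bullet}-1}{e^{\bullet}+1} \).

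The genuine obstacle is the two \( \mathcal{D}_1 \)-summands in which the Laplace variable \( y \) \emph{also} sits inside \( \sech\big(\tfrac{x(\partial_a-\partial_b+y)}{2}\big) \): evaluating \( \mathscr{L}_y\big\{y\,\sech(\tfrac{xy}{2})\sech(\tfrac{x(c+y)}{2})\big\}(s) \) and \( \mathscr{L}_y\big\{\tfrac1x\sinh(\tfrac{xy}{2})\sech(\tfrac{x(c+y)}{2})\big\}(s) \) forces the alternating-geometric expansion of the \( \sech \)'s, turning the transform into an alternating sum of shifted elementary transforms; keeping this well-defined as a formal power series in \( x \) (after multiplication by \( s \)) and matching its poles in \( s \) with those produced on the left — while tracking the \( [\partial_a,a]=[\partial_b,b]=1 \) corrections from the prefactors — is the delicate point. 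If a more conceptual argument is sought, passing to the variables \( p=a+b \), \( q=a \) (respectively \( p=a+b \), \( r=b \)) is attractive: the substitution \( a=-s,\ b=s \) becomes \( p=0 \), the combination \( \partial_a-\partial_b \) becomes \( \partial_q \) (respectively \( -\partial_r \)), and both sides become visibly an identity about the jet of \( f \) in its first argument along \( p=0 \).
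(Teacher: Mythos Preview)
Your reduction to exponential test functions \( f(a,b)=e^{ua+vb} \) is legitimate, and your computation of the left-hand side is correct. But the proposal has a genuine gap precisely at the point you flag as ``the delicate point'': the suggestion to evaluate \( \mathscr{L}_y \) of products like \( y\,\sech(\tfrac{xy}{2})\,\sech(\tfrac{x(c+y)}{2}) \) by expanding the \( \sech \)'s as alternating geometric series in \( e^{\pm xy} \) does not make sense in the formal-power-series-in-\( x \) framework in which the theorem is stated. Writing \( \sech(\tfrac{xy}{2}) = 2\sum_{k\geq 0}(-1)^k e^{-(2k+1)xy/2} \) (and similarly for the shifted factor) and Laplace-transforming termwise yields, for the coefficient of \( x^N \), an infinite sum over \( k \) of terms proportional to \( k^N \), which diverges. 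The same objection applies to the alternative description \( \frac{2}{1+e^{x\partial_b}}h(b) = 2\sum_{k\geq 0}(-1)^k h(b+kx) \) you offer for the Euler operator. In this paper's setup, \( \sech(\tfrac{xy}{2}) \), \( \tanh(\tfrac{xy}{2}) \), etc.\ must be kept as Taylor series in \( x \) with polynomial-in-\( y \) coefficients, and then \( s\mathscr{L}_y \) produces the generating series \( \Psi(z)=\sum_{k}E_k(0)z^k \) evaluated at \( z=x/s \). Unfortunately the \emph{product} of two such \( \sech \)'s with different arguments has no tidy expression of this kind, so even staying in the correct formal framework your route stalls here.

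The paper's proof takes a different and sharper path that removes this obstacle entirely. It introduces \( \mathcal{L}_1 = (1+\exp(x\partial_s))\tfrac{1}{2s}\rvert_{s=b} \), proves it is inverse to the Euler operator \( \mathcal{L}_2 = \tfrac{2s}{1+\exp(x\partial_b)}\rvert_{b=s} \) appearing on the left-hand side, and applies \( \mathcal{L}_1 \) to both sides. On the left this immediately strips off the Euler operator, leaving the elementary expression \( \tfrac{f(0,-b)-f(x,-b)}{2b} + \tfrac{f(x,b+x)-f(0,b+x)}{2(b+x)} \). On the right, the translation lemma \( \exp(x\partial_s)\circ\mathscr{L}_y\{F\}(s)\rvert_{a=-s,b=s} = \mathscr{L}_y\{F\,e^{-x(\partial_a-\partial_b)}e^{-xy}\}(s)\rvert_{a=-s,b=s} \) shows that pre-composing with \( \mathcal{L}_1 \) multiplies the Laplace integrand by \( \tfrac12(1 + e^{-x(\partial_a-\partial_b+y)}) \); this factor exactly cancels the troublesome \( \sech\big(\tfrac{x(\partial_a-\partial_b+y)}{2}\big) \) in \( \mathcal{D}_1 \). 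After this cancellation every remaining Laplace transform is of a single \( \tanh \) or \( \sech \) (or derivative thereof) and evaluates to a rational expression in \( \Psi(x/s) \) and \( \Psi'(x/s) \); a short functional equation for \( \Psi \) then makes the two sides visibly equal. In short, the key idea you are missing is not a better way to compute the hard Laplace transforms, but the observation that applying the inverse Euler operator makes them disappear.
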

	
	The proof is given in \autoref{sec:proof}.  We first show how this theorem implies the above result about alternating multiple zeta values in \autoref{thm:z11evbar}. \medskip
	
	To start with, we note the following generating series for our alternating MZV's.
	\begin{Lem}
		The following alternating MZV's generating series holds
		\[
		\sum_{k,n>0} \zeta(\{1\}^{k-1}, \overline{n+1}) x^k s^{n} = 1 - \pFq{2}{1}{x,-s}{1-s}{-1}\,.
		\]
		
		\begin{proof}
			By direct calculations, the left hand side is equal to
			\begin{align*}
			\sum_{n=1}^\infty \sum_{m=1}^\infty x \cdot \prod_{i<m} \Big( 1 + \frac{x}{i} \Big) \cdot \frac{(-1)^m s^n}{m^{n+1}} 
			&= \sum_{m=1}^\infty \bigg( {-}\,\frac{\poch{x}{m} \poch{-s}{m}}{\poch{1-s}{m}} \bigg) \frac{(-1)^m}{m!}
			\\
			&= 1 - \pFq{2}{1}{x,-s}{1-s}{-1} \,,
			\end{align*}
			which proves the claim.
		\end{proof}
	\end{Lem}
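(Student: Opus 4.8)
The plan is to expand the left-hand side directly from the series definition of the alternating MZV and then recognise the result as a Gauss hypergeometric series. Writing $\zeta(\{1\}^{k-1},\overline{n+1}) = \sum_{0<n_1<\cdots<n_{k-1}<m}\frac{(-1)^{m}}{n_1\cdots n_{k-1}\,m^{n+1}}$ (with $m=n_k$ the innermost index), the idea is to perform, for each fixed $m\geq 1$, the summation over $n_1,\ldots,n_{k-1}$ and the summation over $n$ separately. For fixed $m$, the sum $\sum_{0<n_1<\cdots<n_{k-1}<m}\frac{1}{n_1\cdots n_{k-1}}$ is the elementary symmetric polynomial $e_{k-1}\bigl(1,\tfrac12,\ldots,\tfrac1{m-1}\bigr)$, whose ordinary generating function in $x$ is the finite product $\prod_{i<m}\bigl(1+\tfrac{x}{i}\bigr)$; the shift between the exponent $x^{k}$ appearing in the statement and the depth $k-1$ of the string of $1$'s accounts for the extra factor of $x$. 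Summing the geometric series $\sum_{n\geq 1}\frac{s^{n}}{m^{n+1}}=\frac{s}{m(m-s)}$ then collapses the whole left-hand side to the single sum $\sum_{m\geq 1}x\bigl(\prod_{i<m}(1+\tfrac{x}{i})\bigr)\frac{(-1)^{m}s}{m(m-s)}$.

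It remains to massage this into Pochhammer form. On the one hand, $x\prod_{i=1}^{m-1}\bigl(1+\tfrac{x}{i}\bigr)=\frac{x(x+1)\cdots(x+m-1)}{(m-1)!}=\frac{\poch{x}{m}}{(m-1)!}=m\cdot\frac{\poch{x}{m}}{m!}$, which absorbs the factor $\tfrac1m$ coming from $\frac{s}{m(m-s)}$. On the other hand, a telescoping of rising factorials gives $\frac{\poch{-s}{m}}{\poch{1-s}{m}}=\frac{(-s)(1-s)\cdots(m-1-s)}{(1-s)(2-s)\cdots(m-s)}=\frac{-s}{m-s}$, i.e. $\frac{s}{m-s}=-\frac{\poch{-s}{m}}{\poch{1-s}{m}}$. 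Substituting both identities, the $m$-th summand becomes $-\frac{\poch{x}{m}\poch{-s}{m}}{\poch{1-s}{m}}\cdot\frac{(-1)^{m}}{m!}$.

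Finally one recognises $\sum_{m\geq 0}\frac{\poch{x}{m}\poch{-s}{m}}{\poch{1-s}{m}}\frac{(-1)^{m}}{m!}=\pFq{2}{1}{x,-s}{1-s}{-1}$, so that summing the displayed summand over $m\geq 1$ --- that is, dropping the $m=0$ term, which equals $1$ --- yields $1-\pFq{2}{1}{x,-s}{1-s}{-1}$, as claimed. I do not expect a genuine obstacle: the whole argument is a formal identity in $x$ and $s$, and convergence of each individual $\zeta(\{1\}^{k-1},\overline{n+1})$ is automatic since the final sign is $-1$. The only points that need a little care are the index bookkeeping between $x^k$ and $\{1\}^{k-1}$ noted above, and the telescoping of the Pochhammer ratio; both are routine once spelled out.
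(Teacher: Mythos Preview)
Your proof is correct and follows essentially the same route as the paper's: expand the MZV, sum over the inner indices to obtain $x\prod_{i<m}(1+x/i)$, rewrite in Pochhammer symbols, and identify the Gauss hypergeometric series. You spell out the intermediate geometric sum $\sum_{n\geq 1}s^n/m^{n+1}=s/(m(m-s))$ and the telescoping $\poch{-s}{m}/\poch{1-s}{m}=-s/(m-s)$ that the paper compresses into a single line, but the argument is the same.
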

	
	\begin{Lem}
		The following identity holds for certain special parameters of \( \pFq{2}{1}{\bullet,\bullet}{\bullet}{z} \), at \( z = -1 \),
		\begin{align}
		& \label{eqn:other} b \cdot \pFq{2}{1}{a,a+b}{a+1}{-1} + a \cdot  \pFq{2}{1}{b,a+b}{b+1}{-1} = \frac{\Gamma(1+a)\Gamma(1+b)}{\Gamma(a+b)} \,.
		\end{align}
	\end{Lem}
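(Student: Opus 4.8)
The plan is to reduce each ${}_2F_1$ in \eqref{eqn:other} to an elementary one-dimensional integral and then recognise the combination as a Beta integral. The useful structural feature is that in both hypergeometric functions a lower parameter exceeds an upper parameter by exactly one. Whenever that happens one has a single Euler-type integral: since $(\beta)_n/(\beta+1)_n = \beta/(\beta+n)$ and $1/(\beta+n) = \int_0^1 t^{\beta+n-1}\,\mathrm{d}t$, interchanging summation and integration gives
\[
\pFq{2}{1}{\alpha,\beta}{\beta+1}{z} = \beta \int_0^1 t^{\beta-1} (1 - z t)^{-\alpha} \, \mathrm{d}t \,,
\]
valid for $\Re(\beta) > 0$ (and, say, $|z| < 1$, after which one passes to $z = -1$ by Abel's theorem on the series side and continuity on the integral side).

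Applying this at $z = -1$ with $(\alpha, \beta) = (a+b, a)$ in the first term and $(\alpha, \beta) = (a+b, b)$ in the second, the left-hand side of \eqref{eqn:other} becomes
\[
b \cdot a \int_0^1 \frac{t^{a-1}}{(1+t)^{a+b}} \, \mathrm{d}t + a \cdot b \int_0^1 \frac{t^{b-1}}{(1+t)^{a+b}} \, \mathrm{d}t = a b \int_0^1 \frac{t^{a-1} + t^{b-1}}{(1+t)^{a+b}}\, \mathrm{d}t \,.
\]
The next step is to put the two pieces on a common footing: the substitution $t \mapsto 1/t$ transforms $\int_0^1 t^{b-1}(1+t)^{-(a+b)}\,\mathrm{d}t$ into $\int_1^\infty t^{a-1}(1+t)^{-(a+b)}\,\mathrm{d}t$, so the sum of the two integrals collapses to $\int_0^\infty t^{a-1}(1+t)^{-(a+b)}\,\mathrm{d}t$. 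This is the standard ``rational'' form of the Beta integral, equal to $B(a,b) = \Gamma(a)\Gamma(b)/\Gamma(a+b)$. Multiplying back by $ab$ and using $a\Gamma(a) = \Gamma(1+a)$, $b\Gamma(b) = \Gamma(1+b)$ yields precisely the right-hand side $\Gamma(1+a)\Gamma(1+b)/\Gamma(a+b)$.

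There is no genuinely hard step here; the only point requiring care is the domain of validity. The integral manipulations need $\Re(a), \Re(b) > 0$, and the hypergeometric series at $z = -1$ converge for $\Re(a+b) < 2$, so the chain of equalities holds on a nonempty open set of parameters. Since both sides of \eqref{eqn:other} are analytic in $(a,b)$ away from the poles of $\Gamma$, the identity extends by analytic continuation; equivalently — and this is all that is needed in the sequel, where \eqref{eqn:other} is only ever used after Taylor expansion and application of $\partial_a, \partial_b$ — it holds as an identity of germs of analytic functions near a generic point. (One could alternatively derive \eqref{eqn:other} as a degenerate instance of Kummer's evaluation of ${}_2F_1$ at $-1$, but the direct computation above is self-contained.)
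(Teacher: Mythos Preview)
Your argument is correct, and it follows the same overall arc as the paper's proof: write each ${}_2F_1$ as an Euler-type integral and recombine the two pieces into a single Beta integral. The execution differs slightly. The paper first applies the Pfaff-type transformation ${}_2F_1[a,b;c;-1] = 2^{-a}\,{}_2F_1[a,c-b;c;\tfrac12]$ and only then invokes the general Euler integral; after the substitutions $t=2s$ and $t=2(1-s)$ the two integrals dovetail over $[0,\tfrac12]$ and $[\tfrac12,1]$ to give $B(a,b)$ in its standard $[0,1]$ form. You instead exploit directly that the lower parameter exceeds an upper one by $1$, which collapses the Euler integral to $\beta\int_0^1 t^{\beta-1}(1+t)^{-\alpha}\,\mathrm{d}t$ at $z=-1$ without any preliminary transformation; the inversion $t\mapsto 1/t$ then glues the two halves into the $[0,\infty)$ form of the Beta integral. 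Your route is marginally more economical (one fewer classical identity invoked), while the paper's route keeps everything over a finite interval, which makes the convergence discussion slightly cleaner. Either way the content is the same, and your remarks on the region of validity and analytic continuation are adequate for the use made of the lemma later on.
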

	
	\begin{proof}
		We apply the following transformation formula \cite[\S2.4]{Bailey} and integral representation \cite[\S1.5]{Bailey},
		\begin{align*}
		\pFq{2}{1}{a,b}{c}{-1} &= 2^{-a} \pFq{2}{1}{a,c-b}{c}{\tfrac{1}{2}} \\
		\pFq{2}{1}{a,b}{c}{z} &= \frac{\Gamma(c)}{\Gamma(b)\Gamma(c-b)} \int_0^1  t^{b-1}(1-t)^{c-b-1} (1 - t z)^{-a} \mathrm{d}t \,.
		\end{align*}
		We obtain
		\begin{align*}
		& b \cdot \pFq{2}{1}{a,a+b}{a+1}{-1} + a \cdot  \pFq{2}{1}{b,a+b}{b+1}{-1} \\
		& = 2^{-a} b \cdot  \pFq{2}{1}{1-b,a}{a+1}{\tfrac{1}{2}} + 2^{-b} a \cdot \pFq{2}{1}{1-a,b}{b+1}{\tfrac{1}{2}} \\
		& = 2^{-a} b \frac{\Gamma(a+1)}{\Gamma(a)\Gamma(1)} \int_0^1 t^{a-1} (1 - t/2)^{b-1} \mathrm{d}{t}  + 2^{-b} a \frac{\Gamma(b+1)}{\Gamma(b)\Gamma(1)} \int_0^1 t^{b-1} (1 - t/2)^{a-1} \mathrm{d}{t} 
		\end{align*}
		Simplify the Gamma function combinations, and set \( t = 2s \) in the first integral, and \( t = 2(1-s) \) in the second integral,
		\[
		=  ab  \int_0^{1/2} (s)^{a-1} (1 - s)^{b-1} \mathrm{d}{s} +  ab \int_1^{1/2} (1-s)^{b-1} s^{a-1} (-\mathrm{d}{s}) \,.
		\]
		The two integrals can now be combined (reverse the bounds of the second integral, to remove the minus sign in \( -\mathrm{d}s \)); the result evaluates as the beta function \( \beta(a,b) \coloneqq \frac{\Gamma(a)\Gamma(b)}{\Gamma(a+b)} \), giving
		\[
		= a b \int_0^1 s^{a-1}(1-s)^{b-1} \mathrm{d}s = \frac{a b \Gamma(a)\Gamma(b)}{\Gamma(a+b)} = \frac{\Gamma(a+1)\Gamma(b+1)}{\Gamma(a+b)} \,,
		\]
		as claimed.
	\end{proof}
	
	We now consider the depth \( {>}1 \) part of \autoref{conj:z111evenbar}, and show how to express its generating series via differential operators applied to the \( 1 - \pFq{2}{1}{x, -s}{1-s}{-1} \) generating series.  \Autoref{thm:main} will then give us an expression for this via \( \mathcal{D}_i \) applied to Gamma function combinations, from which \autoref{thm:z11evbar} follows.
	
	\begin{Prop}\label{prop:gs}
		The following generating series identity holds as a formal power-series in \( x \),
		\begin{equation}\label{eqn:gs:lhs}
		\begin{aligned}
		& \sum_{k=0}^\infty \sum_{m=1}^\infty \Big\{ \zeta(\{1\}^k, \overline{2m}) + \sum_{n=m+1}^{m+\lceil k/2 \rceil} [z^{2m-2}] E_{2n-3}(z) \zeta(\{1\}^{k+1-2(n-m)}, \overline{2n-1})	\Big\} x^{k+1} s^{2m-1} \\
		& = -g(x,-s) + \Big( \frac{2s}{1 + \exp(x \partial_b)} \Big\rvert_{b=s} \Big) \circ \Big( \frac{g(x,-b) + g(x,b)}{2b} \Big) \,,
		\end{aligned}
		\end{equation}
		where \( g(x,y) = 1 - \pFq{2}{1}{x,-y}{1-y}{-1} \) is the above generating series of \( \zeta(\{1\}^{k-1},\overline{n+1}) \).
		
		\begin{proof}
			We expand out the right hand side to check.  Firstly note
			\[
			\frac{g(x,-b) + g(x,b)}{2b} = \sum_{k,n>0} \zeta(\{1\}^{k-1}, \overline{2n+1}) x^k b^{2n-1} \,,
			\]
			Then by definition of the action of the differential operator (without the factor \( s \)), we find
			\begin{align*}
			\Big( \frac{2}{1 + \exp(x \partial_b)} \Big\rvert_{b=s} \Big) \circ \Big(\frac{b}{x}\Big)^{N} 
			&= \sum_{i=0}^N \frac{E_i(0)}{i!} x^i  \cdot \frac{\partial^i}{\partial b^i} \Big\rvert_{b=s} \Big(\frac{b}{x}\Big)^{N} \\
			&= \sum_{i=0}^N \frac{E_i(0)}{i!} \frac{N!}{(N-i)!} \Big(\frac{s}{x}\Big)^{N-i} \\
			&= E_N\Big(\frac{s}{x}\Big) \,
			\end{align*}
			via standard properties of the Euler polynomials.  So combining these observations (now with the factor \( s \) in the operator, which commutes with \( x \) and \( \partial_b \)), we have
			\begin{align}
			\label{eqn:diff_on_g}
			& \Big( \frac{2s}{1 + \exp(x \partial_b)} \Big\rvert_{b=s} \Big)  \circ \frac{g(x,-b) + g(x,b)}{2b}  
			=  \sum_{k,n>0} \zeta(\{1\}^{k-1}, \overline{2n+1}) x^{k+2n-1} s E_{2n-1}\Big(\frac{s}{x}\Big) 
			\end{align}
			By noting \( E_{2n-1}(z) \) has non-zero coefficient exactly for \( z^{2i} \), \( 0 \leq i \leq n-1 \), and for \( z^{2n-1} \) (whose coefficient is always 1), we can write
			\[
			E_{2n-1}\Big(\frac{s}{x}\Big) =  \sum_{m=1}^{n} \Big(\frac{s}{x}\Big)^{2m-2} [z^{2m-2}] E_{2n-1}(z) + \Big(\frac{s}{x}\Big)^{2n-1}
			\]
			Substitute this into the right-hand side of \autoref{eqn:diff_on_g} to obtain
			\begin{align*}
			\sum_{k,n>0} & \zeta(\{1\}^{k-1}, \overline{2n+1}) x^{k} s^{2n} \\[-1em]
			& \quad\quad + \sum_{k,n>0} \sum_{m=1}^{n} [z^{2m-2}] E_{2n-1}(z) \zeta(\{1\}^{k-1}, \overline{2n+1}) x^{k+2n-2m+1} s^{2m-1} .
			\end{align*}
			In the triple sum, now set \( k \mapsto k - 2n+2m \), then switch the \( n \) and \( m \) sums, and set \( n \mapsto n-m \)
			\begin{align*}
			\sum_{k,n>0} & \zeta(\{1\}^{k-1}, \overline{2n+1}) x^{k} s^{2n}  \\[-1ex]
			& \quad\quad + \sum_{m=1}^{\infty} \sum_{n=0}^\infty \sum_{k=2n+1}^{\infty} [z^{2m-2}] E_{2n+2m-1}(z) \zeta(\{1\}^{k-2n-1}, \overline{2n+2m+1}) x^{k+1} s^{2m-1} \,.
			\end{align*}
			Now in the triple sum, switch the \( n \) and \( k \) sums, and set \( n \mapsto n - m - 1 \),
			\begin{align*}
			\sum_{k,n>0} & \zeta(\{1\}^{k-1}, \overline{2n+1}) x^{k} s^{2n}  \\[-1em]
			& \quad\quad + \sum_{m=1}^{\infty} \sum_{k=1}^{\infty} \sum_{n=m+1}^{m+\lceil k/2 \rceil}  [z^{2m-2}] E_{2n - 3}(z) \zeta(\{1\}^{k+1-2(n-m)}, \overline{2n-1}) x^{k+1} s^{2m-1} \,.
			\end{align*}
			Then in the triple sum we can extend the sum over \( k \) to start from \( 0 \), as the \( n \) sum is empty in this case.  The triple sum is now exactly as appears in the conclusion.   Finally, it is immediate to see adding \( -g(x,-s) \) to both sides gives us \autoref{eqn:gs:lhs}.
		\end{proof}
	\end{Prop}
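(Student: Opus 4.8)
The plan is to expand the right-hand side of \eqref{eqn:gs:lhs} directly as a formal power series in $x$ and match it term-by-term against the left-hand side, using the generating-series expansion $g(x,y) = \sum_{k,n>0}\zeta(\{1\}^{k-1},\overline{n+1})\,x^k y^n$ established above. First I would note that $\tfrac{1}{2b}\bigl(g(x,b)+g(x,-b)\bigr)$ keeps exactly the part of $g(x,y)$ that is even in $y$, namely
\[
\frac{g(x,b)+g(x,-b)}{2b} \;=\; \sum_{k,n>0}\zeta(\{1\}^{k-1},\overline{2n+1})\,x^k b^{2n-1}.
\]
Then I would compute the operator action: from the generating function of the Euler polynomials at $0$ one has $\tfrac{2}{1+\exp(x\partial_b)}=\sum_{i\ge0}\tfrac{E_i(0)}{i!}x^i\partial_b^i$, and applying this to a monomial $(b/x)^N$ and substituting $b=s$ collapses, via the addition formula $E_N(u+v)=\sum_i\binom{N}{i}E_i(u)v^{N-i}$ with $u=0$ and $v=s/x$, to $E_N(s/x)$. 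Writing $b^{2n-1}=x^{2n-1}(b/x)^{2n-1}$ and restoring the scalar $s$, the operator term on the right of \eqref{eqn:gs:lhs} becomes $\sum_{k,n>0}\zeta(\{1\}^{k-1},\overline{2n+1})\,x^{k+2n-1}\,s\,E_{2n-1}(s/x)$.

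The structural ingredient that makes everything fit is that $E_{2n-1}(z)-z^{2n-1}$ is an even polynomial --- equivalently $E_{2n-1}(z)$ has nonzero coefficients only at $z^0,z^2,\dots,z^{2n-2}$ and, with coefficient $1$, at $z^{2n-1}$ --- which one checks from the standard identities $E_m(z{+}1)+E_m(z)=2z^m$ and $E_m(1{-}z)=(-1)^m E_m(z)$. Consequently
\[
s\,E_{2n-1}(s/x)\;=\;\sum_{m=1}^{n}[z^{2m-2}]E_{2n-1}(z)\,s^{2m-1}x^{2-2m}\;+\;s^{2n}x^{1-2n},
\]
and substituting this splits the operator term into a diagonal double sum $\sum_{k,n>0}\zeta(\{1\}^{k-1},\overline{2n+1})x^k s^{2n}$ plus a triple sum over $k,n,m$ weighted by $[z^{2m-2}]E_{2n-1}(z)$. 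The triple sum is then brought to the desired form by a short chain of index changes: $k\mapsto k-2n+2m$, then interchange the $n$- and $m$-sums and set $n\mapsto n-m$, then interchange the $n$- and $k$-sums and set $n\mapsto n-m-1$. This produces the summation range $\sum_{m\ge1}\sum_{k\ge1}\sum_{n=m+1}^{m+\lceil k/2\rceil}$, and since the inner sum is empty for $k=0$ one may start the $k$-sum at $0$; the triple sum now matches the one on the left of \eqref{eqn:gs:lhs} verbatim.

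It remains to handle the term $-g(x,-s)$, which I would split by the parity of its summation index. The odd-index part, after the shift $k\mapsto k+1$, is exactly $\sum_{k\ge0}\sum_{m\ge1}\zeta(\{1\}^k,\overline{2m})\,x^{k+1}s^{2m-1}$ --- the remaining piece of the left-hand side of \eqref{eqn:gs:lhs} --- while the even-index part equals $-\sum_{k,n>0}\zeta(\{1\}^{k-1},\overline{2n+1})x^k s^{2n}$, which cancels the diagonal double sum produced above. Adding the three contributions yields \eqref{eqn:gs:lhs}. The only real point requiring care is the bookkeeping in the re-indexing step --- getting the ceiling $\lceil k/2\rceil$ in the bounds right and justifying that the $k$-sum can start at $0$ --- together with the elementary verification of the support claim for $E_{2n-1}$; everything else is routine formal power-series algebra.
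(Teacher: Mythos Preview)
Your proposal is correct and follows essentially the same route as the paper's proof: the same computation of the operator on $(b/x)^N$ via the Euler-polynomial expansion, the same support observation for $E_{2n-1}(z)$, the identical chain of re-indexings, and the same parity split of $-g(x,-s)$ to produce the $\zeta(\{1\}^k,\overline{2m})$ terms and cancel the diagonal sum. Your write-up is slightly more explicit than the paper's in naming the Euler-polynomial identities used and in spelling out the final cancellation, but the argument is the same.
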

	
	We then obtain the following corollary of \autoref{thm:main}, by setting \( f(a,b) = \pFq{2}{1}{a,b}{1+b}{-1} \).
	
	\begin{Cor}\label{cor:z11evbar}
		The following identity holds as a formal power-series in \( x \),
		\begin{align*}
		& \sum_{k=0}^\infty \sum_{m=1}^\infty \Big\{ \zeta(\{1\}^k, \overline{2m}) + \sum_{n=m+1}^{m+\lceil k/2 \rceil} [z^{2m-2}] E_{2n-3}(z) \zeta(\{1\}^{k+1-2(n-m)}, \overline{2n-1})	\Big\} x^{k+1} y^{2m-1} \\
		& = \mathcal{D}_1 \circ \Big( \frac{\Gamma(1+a)\Gamma(1+b)}{\Gamma(a+b)}  \Big) + \mathcal{D}_2 \circ 1 +
		\underbrace{\mathcal{D}_3 \circ 0}_{=0}
		\end{align*}
		
		\begin{proof}
			Set \( f(a,b) = \pFq{2}{1}{a,b}{1+b}{-1} \) in \autoref{thm:main}.  Then with \(  g(x,y) = 1 - \pFq{2}{1}{x,-y}{1-y}{-1}  \), we see
			\begin{align*}
			\left\{ 
			\begin{aligned}
			& f(x,s) - f(0,s) = -g(x,-s) \,, \text{ and } \\
			& - \tfrac{ f(x, -b) + f(x, b)}{2b} + \tfrac{ f(0, -b) + f(0, b) }{2b} = \tfrac{g(x,-b) + g(x,b)}{2b} \,.
			\end{aligned}
			\right.
			\end{align*}
			This means the left-hand side of \autoref{thm:main} reduces to the generating series on the left-hand side of the statement of the corollary, by \autoref{eqn:gs:lhs} in \autoref{prop:gs}.
			
			On the other hand,
			\begin{align*}
			\left\{ 
			\begin{aligned}
			& \mathcal{D}_1 \circ \big(b f(a+b, a) + a f(a+b, b)\big) = \mathcal{D}_1 \circ \Big( \frac{\Gamma(1+a)\Gamma(1+b)}{\Gamma(a+b)} \Big)  \\
			& \mathcal{D}_2 \circ f(0, b) = \mathcal{D}_2 \circ 1 \\[1ex]
			& \mathcal{D}_3 \circ \big( \partial_b ( f(0, b) - f(0, -b) ) \big) = \mathcal{D}_3 \circ \big( \partial_b (1 - 1) \big) = \mathcal{D}_3 \circ 0
			\end{aligned} \right.
			\end{align*}
			by \autoref{eqn:other} for \( \mathcal{D}_1 \), and by the definition of \( {}_pF_q \) giving \( \pFq{2}{1}{0,\pm b}{1\pm b}{z} = 1 \) for \( \mathcal{D}_2, \mathcal{D}_3 \).  This has proven the corollary.
		\end{proof}
	\end{Cor}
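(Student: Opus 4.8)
The plan is to deduce the corollary as a direct specialization of \autoref{thm:main}, taking \( f(a,b) = \pFq{2}{1}{a,b}{1+b}{-1} \), and then to identify each term of the resulting identity with the corresponding term of the corollary using the auxiliary facts already in hand: \autoref{prop:gs} for the left-hand side, \autoref{eqn:other} for the \( \mathcal{D}_1 \)-term, and the elementary degeneration \( \pFq{2}{1}{0,\bullet}{\bullet}{-1} = 1 \) for the \( \mathcal{D}_2 \)- and \( \mathcal{D}_3 \)-terms. Because \autoref{thm:main} is valid for any continuously differentiable \( f \), no new analytic input is needed; the work is entirely in matching shapes on both sides.

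First I would re-express the chosen \( f \) in terms of the MZV generating series \( g(x,y) = 1 - \pFq{2}{1}{x,-y}{1-y}{-1} \) of \autoref{prop:gs}. Unwinding the parameter substitutions \( -y \leftrightarrow b \), \( 1-y \leftrightarrow 1+b \) yields \( f(x,b) = 1 - g(x,-b) \) and \( f(x,-b) = 1 - g(x,b) \), together with \( f(0,\cdot) = 1 \). From these I would verify the two inputs to the left-hand side of \autoref{thm:main}: namely \( f(x,s) - f(0,s) = -g(x,-s) \), and, after combining the \( b \)-symmetric pieces, \( -\tfrac{f(x,-b)+f(x,b)}{2b} + \tfrac{f(0,-b)+f(0,b)}{2b} = \tfrac{g(x,-b)+g(x,b)}{2b} \). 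These are exactly the quantities appearing in \autoref{eqn:gs:lhs}, so \autoref{prop:gs} identifies the left-hand side of the specialized theorem with the double-sum generating series claimed in the corollary.

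Next I would handle the three operator terms on the right. For \( \mathcal{D}_1 \), the argument \( b f(a+b,a) + a f(a+b,b) \) becomes precisely \( b\,\pFq{2}{1}{a,a+b}{a+1}{-1} + a\,\pFq{2}{1}{b,a+b}{b+1}{-1} \), which by \autoref{eqn:other} equals \( \tfrac{\Gamma(1+a)\Gamma(1+b)}{\Gamma(a+b)} \); here I would remark that this combination is analytic across \( a+b = 0 \), since \( 1/\Gamma \) is entire, so \( \mathcal{D}_1 \) (with its substitution \( a=-s, b=s \)) acts unambiguously despite the formal appearance of \( \Gamma(a+b) \). For \( \mathcal{D}_2 \) the argument \( f(0,b) = 1 \), and for \( \mathcal{D}_3 \) the argument \( \partial_b(f(0,b) - f(0,-b)) = \partial_b(1-1) = 0 \), whence \( \mathcal{D}_3 \circ 0 = 0 \). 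Assembling these three identifications reproduces exactly the right-hand side of the corollary.

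I expect the only genuinely delicate point to be the left-hand side bookkeeping: correctly tracking the parameter swaps between \( f \) and \( g \), and confirming that the symmetrized combination collapses to the even-in-\( b \) series \( \tfrac{g(x,-b)+g(x,b)}{2b} \) that \autoref{prop:gs} expects. Everything else reduces to formal substitution into \autoref{thm:main} together with the cited lemmas.
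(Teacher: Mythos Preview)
Your proposal is correct and follows essentially the same route as the paper's proof: specialize \autoref{thm:main} at \( f(a,b) = \pFq{2}{1}{a,b}{1+b}{-1} \), match the left-hand side to \autoref{prop:gs} via the substitutions \( f(x,s)-f(0,s)=-g(x,-s) \) and the symmetrized identity, and reduce the right-hand side using \autoref{eqn:other} for \( \mathcal{D}_1 \) and the degeneration \( f(0,\pm b)=1 \) for \( \mathcal{D}_2,\mathcal{D}_3 \). Your added remark on analyticity of \( \Gamma(1+a)\Gamma(1+b)/\Gamma(a+b) \) across \( a+b=0 \) is a welcome clarification not spelled out in the paper's version of this proof.
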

	
	With this, \autoref{thm:z11evbar} from the introduction is proven to follow from \autoref{thm:main}, as is \autoref{conj:z111evenbar} (the original motivation for our investigation).  In the next section we complete the proof of \autoref{thm:main}, thus concluding the paper.
	
	\section{Proof of Main Theorem}\label{sec:proof}
	
	Introduce the following differential operators:
	\[
	\mathcal{L}_1 = \big( 1 + \exp(x \partial_s)\big) \, \frac{1}{2s} \Big\rvert_{s=b} \quad \,, \text{ and } \quad
	\mathcal{L}_2 = \frac{2s}{1 + \exp(x \partial_b)} \Big\rvert_{b=s} \,.
	\]
	These two differential operators are inverses (see \autoref{lem:inv} below), so by applying \( \mathcal{L}_1 \) to both sides of \autoref{thm:main}, we obtain the following identity which we will prove directly,
	\begin{equation}\label{eqn:goal}
	\begin{aligned}[c]
	&  \mathcal{L}_1 \circ ( f(x, s) - f(0, s)) \\
	& + \bigg( {-} \frac{ f(x, -b) + f(x, b)}{2b} + \frac{ f(0, -b) + f(0, b) }{2b} \bigg) \\[2ex]
	& \overset{?}{=} \begin{aligned}[t] 
	&  \mathcal{L}_1  \circ \mathcal{D}_1 \circ \big(b f(a+b, a) + a f(a+b, b)\big) \\
	& +  \mathcal{L}_1  \circ\mathcal{D}_2 \circ f(0, b) \quad + \quad  \mathcal{L}_1 \circ \mathcal{D}_3 \circ \big( \partial_b ( f(0, b) - f(0, -b) ) \big) \,. \end{aligned}
	\end{aligned}
	\end{equation}
	Then by applying the differential operator \( \mathcal{L}_2 \), which is inverse to \( \mathcal{L}_1 \) (see below), we will obtain the desired identity for \autoref{thm:main}. \medskip
	
	We start with some lemmas about the basic differential operators.
	\begin{Lem}\label{lem:trans}
		On functions \( \phi(y) \), the operator \( \exp(x \partial_y) \) acts as translation by \( x \), that is
		\[
		\exp(x \partial_y) \circ \phi(y) = \phi(x + y) \,.
		\]
		
		\begin{proof}
			This is a standard result about differential operators, and it follows directly via Taylor series,\vspace{-1em}
			\[
			\exp(x \partial_y) \circ \phi(y) = \sum_{i=0}^\infty \frac{x^i \phi^{(i)}(y)}{i!} = \phi(x + y) \,. \qedhere
			\]
		\end{proof}
	\end{Lem}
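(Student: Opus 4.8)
The plan is to treat the claimed equality as an identity of formal power series in $x$, which is the sense in which every operator action in the paper is defined. By construction, $\exp(x\partial_y) = \sum_{i=0}^\infty \frac{x^i}{i!}\partial_y^{\,i}$ as an element of $\mathbb{Q}[\partial_y]\llbracket x\rrbracket$, and its action on a function $\phi(y)$ is computed coefficient-wise in $x$. So first I would expand the left-hand side term by term, obtaining
\[
\exp(x\partial_y)\circ\phi(y) = \sum_{i=0}^\infty \frac{x^i}{i!}\,\phi^{(i)}(y) \,,
\]
where $\phi^{(i)}$ is the $i$-th derivative of $\phi$ (well-defined since $\phi$ is assumed continuously differentiable — indeed analytic in all the cases where the lemma is applied). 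Then I would expand the right-hand side $\phi(x+y)$ as a formal Taylor series in $x$ about $x=0$, which is precisely $\sum_{i=0}^\infty \frac{x^i}{i!}\phi^{(i)}(y)$. Comparing coefficients of $x^i$ on the two sides concludes the argument.

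There is essentially no obstacle here; the only point worth flagging is the formal nature of the statement. We are not asserting that the series $\sum_i \frac{x^i}{i!}\phi^{(i)}(y)$ converges analytically to $\phi(x+y)$, only that the two sides agree as formal power series in $x$, the right-hand side being interpreted via its Taylor expansion. This is exactly the convention under which the composite operators $\mathcal{D}_i$ and $\mathcal{L}_i$ act throughout the paper, so beyond this bookkeeping no further care is required, and the lemma serves simply to identify $\exp(x\partial_y)$ with the shift operator $y \mapsto y+x$ for use in the subsequent manipulations.
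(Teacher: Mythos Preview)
Your proof is correct and is essentially identical to the paper's own argument: expand $\exp(x\partial_y)$ as $\sum_i \frac{x^i}{i!}\partial_y^{\,i}$, apply it to $\phi$, and recognise the resulting series as the Taylor expansion of $\phi(x+y)$. Your additional remark about interpreting the identity at the level of formal power series in $x$ is a helpful clarification entirely in line with the paper's conventions.
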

	
	\begin{Lem}\label{lem:inv}
		The differential operators
		\[
		\mathcal{L}_1 = \big( 1 + \exp(x \partial_s)\big) \, \frac{1}{2s} \Big\rvert_{s=b} \quad \,, \text{ and } \quad
		\mathcal{L}_2 = \frac{2s}{1 + \exp(x \partial_b)} \Big\rvert_{b=s} \,,
		\]
		are inverses, that is \( \mathcal{L}_1 \circ \mathcal{L}_2 = \mathcal{L}_2 \circ \mathcal{L}_1 = \operatorname{id} \).
		
		\begin{proof}
			This is a standard result in the theory of differential operators, and follows by straightforward computation using the definitions.  We briefly indicate how this works. \medskip
			
			\paragraph{\em Case \( \mathcal{L}_1 \circ \mathcal{L}_2 \):}  Unravelling the definitions, we have
			\begin{equation}
			\label{eqn:l1l2}
			\Big( \big( 1 + \exp(x \partial_s)\big) \, \frac{1}{2s} \Big\rvert_{s=b} \Big)  \circ \Big( \frac{2s}{1 + \exp(x \partial_b)} \Big\rvert_{b=s} \Big) \circ \phi(b) 
			= \sum_{i=0}^\infty \frac{E_i(0) x^i \big( \phi^{(i)}(b) + \phi^{(i)}(b+x) \big)}{2 \, i!} \,.
			\end{equation}
			By Taylor expanding \( \phi^{(i)}(b+x) \), we have
			\[
			\sum_{i=0}^\infty \frac{E_i(0) x^i \phi^{(i)}(b+x)}{2 \, i!} = \sum_{i=0}^\infty \sum_{m=0}^\infty \frac{E_i(0) x^i \cdot x^m \phi^{(i+m)}(b)}{2 \, m! \, i!} = \sum_{M=0}^\infty \sum_{i=0}^{M} \frac{E_i(0)}{i! (M-i!)} x^M \phi^{(M)}(b) \,,
			\]
			where \( M = m+i \) in the second equality.  The following translation property of Euler polynomials, \( E_M(x+y) = \sum_{i=0}^M E_i(x) \binom{M}{i} y^{M-i} \), shows that sum over \( i \) above is \( E_i(1) \).  So \autoref{eqn:l1l2} becomes
			\[
			\mathcal{L}_1 \circ \mathcal{L}_2 \circ \phi(b) =  \sum_{i=0}^\infty \frac{(E_i(0) + E_i(1))x^i \phi^{(i)}(b)}{2 \, i!} = \phi(b) \,;
			\]
			the identity \( E_i(x) + E_i(1+x) = 2x^i \) shows \( E_i(0) + E_i(1) = 2\delta_{i=0} \), so only the \( i = 0 \) term survives.\medskip
			
			\paragraph{\em Case \( \mathcal{L}_2 \circ \mathcal{L}_1 \)}  On the other hand, unravelling the definitions here gives
			\begin{equation}
			\label{eqn:l2l1}
			\begin{aligned}[c]
			\Big( \frac{2s}{1 + \exp(x \partial_b)} \Big\rvert_{b=s} \Big) \circ \Big( \big( 1 + \exp(x \partial_s)\big) \, \frac{1}{2s} \Big\rvert_{s=b} \Big) \circ \phi(s)  \\
			= \sum_{i=0}^\infty \frac{E_i(0) x^i}{2 \, i!} \, \partial^i_b \Big( \frac{\phi(b)}{b} + \frac{\phi(b+x)}{b+x} \Big) \Big\rvert_{b=s}  \,.
			\end{aligned}
			\end{equation}
			but with \( \vartheta(s) \coloneqq \frac{\phi(s)}{s} \), we have
			\[
			\mathcal{L}_2 \circ \mathcal{L}_1 \circ \phi(s) = s \sum_{i=0}^\infty \frac{E_i(0) x^i}{2 \, i!} \big( \vartheta^{(i)}(s) + \vartheta^{(i)}(s+x) \big) = s \, \vartheta(s) = \phi(s) \,,
			\] 
			by the calculation above.\medskip
			
			\noindent This has shown that \( \mathcal{L}_1 \) and \( \mathcal{L}_2 \) are inverse differential operators, so the result is proven.
		\end{proof}
	\end{Lem}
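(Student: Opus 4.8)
The plan is to verify both composition identities $\mathcal{L}_1\circ\mathcal{L}_2=\operatorname{id}$ and $\mathcal{L}_2\circ\mathcal{L}_1=\operatorname{id}$ by a direct formal computation: expand each operator as a power series in $x$, apply the composite to an arbitrary smooth test function $\phi$, and collapse the resulting double sum using two elementary identities for the Euler polynomials.

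First I would give the rational expression $\frac{2}{1+\exp(x\partial_b)}$ a concrete meaning as a formal power series. Setting $x=0$ in the defining generating series of the Euler polynomials gives $\frac{2}{1+e^t}=\sum_{i\geq0}E_i(0)\tfrac{t^i}{i!}$, whence
\[
\frac{2}{1+\exp(x\partial_b)}=\sum_{i=0}^\infty\frac{E_i(0)}{i!}\,x^i\partial_b^i
\]
as a power series in $x$ with polynomial-in-$\partial_b$ coefficients; the factor $1+\exp(x\partial_s)$ in $\mathcal{L}_1$ acts, by \autoref{lem:trans}, as $\phi^{(i)}(s)\mapsto\phi^{(i)}(s)+\phi^{(i)}(s+x)$. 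With these two ingredients the composite $\mathcal{L}_1\circ\mathcal{L}_2$ applied to $\phi(b)$ unravels to
\[
\mathcal{L}_1\circ\mathcal{L}_2\circ\phi(b)=\sum_{i=0}^\infty\frac{E_i(0)\,x^i}{2\,i!}\bigl(\phi^{(i)}(b)+\phi^{(i)}(b+x)\bigr),
\]
keeping careful track of the two substitutions $b=s$ and $s=b$ so that each translation operator acts on the intended variable.

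Next I would Taylor-expand $\phi^{(i)}(b+x)=\sum_{m\geq0}\tfrac{x^m}{m!}\phi^{(i+m)}(b)$ and reindex the double sum by the total order $M=i+m$, using $\tfrac{x^i}{i!}\tfrac{x^m}{m!}=\tfrac{x^M}{M!}\binom{M}{i}$. The coefficient of $\phi^{(M)}(b)$ then contains the inner sum $\sum_{i=0}^M\binom{M}{i}E_i(0)$, which by the Euler-polynomial addition formula $E_M(x+y)=\sum_i\binom{M}{i}E_i(x)\,y^{M-i}$ (at $x=0$, $y=1$) equals $E_M(1)$. The composite thus becomes $\sum_{M\geq0}\tfrac{x^M}{2\,M!}\bigl(E_M(0)+E_M(1)\bigr)\phi^{(M)}(b)$, and the complementarity relation $E_M(x)+E_M(1+x)=2x^M$ evaluated at $x=0$ gives $E_M(0)+E_M(1)=2\delta_{M,0}$; only the $M=0$ term survives, leaving $\phi(b)$. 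For the reverse composition $\mathcal{L}_2\circ\mathcal{L}_1$ the same reduction applies after absorbing the prefactor $\tfrac{1}{2s}$: writing $\vartheta(s)=\phi(s)/s$, the identical calculation yields $s\,\vartheta(s)=\phi(s)$.

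The hard part is not analytic but organizational. The essential content is the two Euler-polynomial identities — the addition formula and the complementarity $E_M(0)+E_M(1)=2\delta_{M,0}$ — both of which follow directly from the generating-series definition; the remaining difficulty lies in the careful bookkeeping of the two opposite substitutions $s=b$, $b=s$ and in ensuring each occurrence of $\exp(x\partial_\bullet)$ is expanded against the correct variable before the sums are collapsed.
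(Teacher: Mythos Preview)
Your proposal is correct and follows essentially the same route as the paper: both unwind $\mathcal{L}_1\circ\mathcal{L}_2$ to $\sum_i E_i(0)x^i(\phi^{(i)}(b)+\phi^{(i)}(b+x))/(2\,i!)$, Taylor-expand and reindex by $M=i+m$, invoke the Euler-polynomial addition formula to recognize $E_M(1)$, and finish with $E_M(0)+E_M(1)=2\delta_{M,0}$; for the reverse composition both set $\vartheta(s)=\phi(s)/s$ and recycle the same computation.
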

	
	We now collect a number of lemmas which allow us to easily compute the results of the translation operator \( \exp(x \partial_s) \) acting on differential operators which have been defined via Laplace transforms.
	
	\begin{Lem}\label{lem:laplace:stranslate}
		For \( f \in \mathbb{Q}[\partial_a,\partial_b,y]\llbracket x \rrbracket \), the following differential operators agree
		\[
		\exp(x \partial_s) \mathscr{L}_y\{f(y, \partial_a,\partial_b; x)\}(s) = \mathscr{L}_y\{ f(y,\partial_a,\partial_b; x) \exp(-x y) \}(s) \,.
		\]
		
		\begin{proof}
			Firstly, a standard property of Laplace transforms is 
			\[
			\partial_s^n \mathscr{L}_y\{g(y)\}(s) = \mathscr{L}_y\{ g(y) (-y)^n \}(s)  \,,
			\]
			which is immediate to verify.  Now we check that the coefficient of \( x^N \) on both sides agrees.  Write
			\[
			f(y, \partial_a, \partial_b; x) = \sum_{i=0}^\infty \lambda_i(y, \partial_a, \partial_b) x^i \,,
			\]
			then
			\begin{align*}
			[x^N] \exp(x \partial_s) \mathscr{L}_y\{f(y, \partial_a,\partial_b; x)\}(s) 
			& = \sum_{i+j=N} \frac{\partial_s^i}{i!} \mathscr{L}_y\big\{ \lambda_j(y, \partial_a, \partial_b)  \big\}(s) \\
			& =  \mathscr{L}_y\Big\{ \sum_{i+j=N} \lambda_j(y, \partial_a, \partial_b) \frac{(-y)^i}{i!} \Big\}(s) \\
			& = [x^N] \mathscr{L}_y\Big\{ f(y, \partial_a, \partial_b) \exp(-x y) \Big\}(s) \,.
			\end{align*}
			So the claim is proven.
		\end{proof}
	\end{Lem}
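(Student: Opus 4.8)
The plan is to prove the identity by comparing coefficients of $x^N$ on both sides, reducing everything to the elementary shift rule for the (ordinary) Laplace transform, $\partial_s^n \mathscr{L}_y\{g(y)\}(s) = \mathscr{L}_y\{(-y)^n g(y)\}(s)$, which is immediate from differentiating under the integral sign (equivalently, in the purely formal setting, from $\partial_s \bigl(n!/s^{n+1}\bigr) = -(n+1)!/s^{n+2}$ together with linearity). Throughout, $\partial_a$ and $\partial_b$ play no active role: they ride along inside the polynomial coefficients and commute with $y$, with $\partial_s$, and with $\mathscr{L}_y\{\bullet\}(s)$, so it is enough to treat each coefficient $\lambda_j(y,\partial_a,\partial_b)$ as a formal symbol in which all the operators in sight are linear.

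First I would write $f(y,\partial_a,\partial_b;x) = \sum_{j\ge 0}\lambda_j(y,\partial_a,\partial_b)\,x^j$ and expand the translation operator as $\exp(x\partial_s) = \sum_{k\ge 0}\tfrac{x^k}{k!}\,\partial_s^k$. Forming the Cauchy product and extracting $[x^N]$ on the left-hand side gives $\sum_{k+j=N}\tfrac{1}{k!}\,\partial_s^k\,\mathscr{L}_y\{\lambda_j\}(s)$; applying the shift rule termwise rewrites this as $\mathscr{L}_y\bigl\{\sum_{k+j=N}\tfrac{(-y)^k}{k!}\,\lambda_j\bigr\}(s)$.

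Next I would carry out the analogous computation on the right-hand side. Multiplying $f$ by $\exp(-xy)=\sum_{k\ge0}\tfrac{(-y)^k}{k!}x^k$ and extracting $[x^N]$ yields precisely $\sum_{k+j=N}\tfrac{(-y)^k}{k!}\,\lambda_j$, to which one then applies $\mathscr{L}_y\{\bullet\}(s)$; this matches the left-hand side degree by degree. Since two formal power series in $x$ with coefficients in $\mathbb{Q}[s^{-1},\partial_a,\partial_b]$ are equal exactly when all their coefficients agree, and such an equality of series is by definition an equality of the corresponding differential operators, the lemma follows.

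I do not expect a genuine obstacle here; the only points needing care are bookkeeping of the formal setup — checking that $\mathscr{L}_y\{\lambda_j\}(s)$ indeed lands in $\mathbb{Q}[s^{-1},\partial_a,\partial_b]$ so that $\partial_s$ acts on it termwise, and that the double sum over $k$ and $j$ is the legitimate Cauchy product of two honest formal power series in $x$ — rather than anything analytic.
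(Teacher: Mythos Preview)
Your proposal is correct and follows essentially the same argument as the paper: write $f=\sum_j \lambda_j x^j$, expand $\exp(x\partial_s)$, extract the $x^N$-coefficient on each side, and match the two via the elementary rule $\partial_s^n\mathscr{L}_y\{g\}(s)=\mathscr{L}_y\{(-y)^n g\}(s)$. Your additional remarks on the formal bookkeeping (the role of $\partial_a,\partial_b$ as inert symbols, and why the Cauchy product is legitimate) are helpful elaborations but do not change the route.
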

	
	\begin{Lem}\label{lem:op1}
		For \( f \in \mathbb{Q}[\partial_a,\partial_b,y]\llbracket x \rrbracket \), the following identity holds
		\begin{align*}
		& \exp(x \partial_s) \circ \mathscr{L}_y\big\{ f(y, \partial_a, \partial_b; x) \big\}(s) \big\rvert_{a=-s, b=s} \\
		& = \mathscr{L}_y\big\{ f(y, \partial_a, \partial_b; x) \exp(-x \partial_a + x \partial_b) \exp(-x y) \big\}(s) \big\rvert_{a=-s,b=s}
		\end{align*}
		
		\begin{proof}
			From \autoref{lem:laplace:stranslate}, we have
			\begin{align*}
			& \exp(x \partial_s) \circ \mathscr{L}_y\big\{ f(y, \partial_a, \partial_b; x) \big\}(s) \big\rvert_{a=-s, b=s} \\
			& = \mathscr{L}_y\big\{ f(y, \partial_a, \partial_b; x) \exp(-xy) \big\}(s) \big\rvert_{a=-s-x, b=s+x}
			\end{align*}
			We can then rewrite the substitution as follows: \( \phi(a,b) \big\rvert_{a=-s-x, b=s+x} = \phi(a-x,b+x) \big\rvert_{a=-s, b=s} \).  Then we can express this through a translation operator with respect to \( a \) and \( b \), namely
			\[
			\phi(a-x,b+x) = \exp(-x\partial_a + x\partial_b) \circ \phi(a,b) \,.
			\]
			So overall
			\begin{align*}
			& \mathscr{L}_y\big\{ f(y, \partial_a, \partial_b; x) \exp(-xy) \big\}(s) \big\rvert_{a=-s-x, b=s+x} \\
			& = \mathscr{L}_y\big\{ f(y, \partial_a, \partial_b; x) \exp(-xy) \big\}(s) \exp(-x\partial_a + x\partial_b) \big\rvert_{a=-s, b=s} \\
			& = \mathscr{L}_y\big\{ f(y, \partial_a, \partial_b; x) \exp(-xy) \exp(-x\partial_a + x\partial_b) \big\}(s) \big\rvert_{a=-s, b=s} \,,
			\end{align*}
			where the linearity in the last step is straightforward to check for formal power-series: it holds directly coefficient by coefficient.
		\end{proof}
	\end{Lem}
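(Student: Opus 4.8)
The plan is to simplify the left-hand side by stripping off the outer operators from the outside in, beginning with the translation operator \( \exp(x\partial_s) \).  The first --- and essentially the only delicate --- step is to note that \( \exp(x\partial_s) \) translates \emph{every} occurrence of \( s \) by \( x \): not only the \( s \) appearing directly as the Laplace variable in \( \mathscr{L}_y\{f\}(s) \), but also the two copies of \( s \) concealed in the substitution \( \rvert_{a=-s,\,b=s} \), which is already in force when \( \exp(x\partial_s) \) is applied.  The direct occurrence is handled by \autoref{lem:laplace:stranslate}, which turns \( \exp(x\partial_s)\mathscr{L}_y\{f\}(s) \) into \( \mathscr{L}_y\{f\exp(-xy)\}(s) \); the two concealed copies turn \( \rvert_{a=-s,\,b=s} \) into \( \rvert_{a=-s-x,\,b=s+x} \).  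So, as differential operators,
\[
\exp(x\partial_s)\circ\mathscr{L}_y\{f\}(s)\,\big\rvert_{a=-s,\,b=s}
= \mathscr{L}_y\{f\exp(-xy)\}(s)\,\big\rvert_{a=-s-x,\,b=s+x}\,.
\]

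Next I would reabsorb the shifted substitution into a differential operator acting \emph{before} the original substitution.  For any function \( \phi(a,b) \) one has \( \phi(a,b)\rvert_{a=-s-x,\,b=s+x} = \phi(a-x,b+x)\rvert_{a=-s,\,b=s} \), and \autoref{lem:trans}, applied to the variables \( a \) and \( b \) separately and combined using that \( \partial_a \) and \( \partial_b \) commute, gives \( \phi(a-x,b+x) = \exp(-x\partial_a+x\partial_b)\circ\phi(a,b) \).  Hence the right-hand side of the display above equals
\[
\Big( \exp(-x\partial_a+x\partial_b)\circ\mathscr{L}_y\{f\exp(-xy)\}(s) \Big)\,\big\rvert_{a=-s,\,b=s}\,.
\]

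Finally I would slide \( \exp(-x\partial_a+x\partial_b) \) inside the Laplace bracket.  Since \( \mathscr{L}_y\{\,\cdot\,\}(s) \) is \( \mathbb{Q}[\partial_a,\partial_b]\llbracket x\rrbracket \)-linear (the routine coefficient-by-coefficient check), and \( \exp(-x\partial_a+x\partial_b) \) commutes with the \( \mathbb{Q}[s^{-1},\partial_a,\partial_b] \)-valued coefficients that \( \mathscr{L}_y \) produces, the operator may be moved inside, giving \( \mathscr{L}_y\{f\exp(-xy)\exp(-x\partial_a+x\partial_b)\}(s)\rvert_{a=-s,\,b=s} \).  A final harmless reordering --- \( f \), \( \exp(-xy) \), and \( \exp(-x\partial_a+x\partial_b) \) are all (formal power-)series in the pairwise commuting symbols \( \partial_a,\partial_b,y,x \) --- rewrites the integrand as \( f\exp(-x\partial_a+x\partial_b)\exp(-xy) \), which is precisely the claimed right-hand side.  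I expect all the genuine content to lie in the first paragraph: pinning down the convention for how \( \exp(x\partial_s) \) interacts with a substitution already present to its right, so that the substitution targets \( -s \) and \( s \) are shifted correctly; everything after that is a formal manipulation using commutativity and linearity.
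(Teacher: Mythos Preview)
Your proposal is correct and follows essentially the same route as the paper's proof: apply \autoref{lem:laplace:stranslate} to handle the direct \( s \)-dependence while simultaneously shifting the substitution to \( \rvert_{a=-s-x,\,b=s+x} \), then rewrite that shifted substitution as \( \exp(-x\partial_a+x\partial_b) \) acting before the original substitution via \autoref{lem:trans}, and finally pull this factor inside the Laplace bracket by coefficient-wise linearity. The only cosmetic difference is that you place \( \exp(-x\partial_a+x\partial_b) \) to the left of \( \mathscr{L}_y\{f\exp(-xy)\}(s) \) whereas the paper places it to the right, but since both are elements of the commutative ring \( \mathbb{Q}[s^{-1},\partial_a,\partial_b]\llbracket x\rrbracket \) this is immaterial.
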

	The following two lemmas hold by analogous proofs to \autoref{lem:op1} above.
	\begin{Lem}\label{lem:op2}
		For \( f \in \mathbb{Q}[\partial_b,y]\llbracket x \rrbracket \), the following identity holds
		\begin{align*}
		& \exp(x \partial_s) \circ \mathscr{L}_y\{f(y, \partial_b; x)\}(s) \rvert_{b=s} = \mathscr{L}_y\{f(y,\partial_b;x) \exp(x \partial_b) \exp(-x y)\}(s)  \rvert_{b=s}
		\end{align*}
	\end{Lem}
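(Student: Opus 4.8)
The plan is to follow the proof of \autoref{lem:op1} essentially verbatim, noting only two simplifications: \( f \) carries no dependence on \( \partial_a \), and the trailing substitution fixes only \( b \). The first step is to push the translation operator \( \exp(x\partial_s) \) inside the Laplace transform by means of \autoref{lem:laplace:stranslate}. Since \( \exp(x\partial_s) \) effects the replacement \( s \mapsto s+x \), and \( s \) occurs both as the transform variable and in the substitution rule \( \big\rvert_{b=s} \), this produces the \( \exp(-xy) \) factor from \autoref{lem:laplace:stranslate} together with a shifted substitution:
\[
\exp(x\partial_s) \circ \mathscr{L}_y\{f(y,\partial_b;x)\}(s)\big\rvert_{b=s} = \mathscr{L}_y\{f(y,\partial_b;x)\exp(-xy)\}(s)\big\rvert_{b=s+x} \,.
\]

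The second step is to re-express the shifted substitution through a translation operator in \( b \). For any \( \psi \) one has, by \autoref{lem:trans}, \( \psi(b)\big\rvert_{b=s+x} = \psi(b+x)\big\rvert_{b=s} = \big(\exp(x\partial_b)\circ\psi(b)\big)\big\rvert_{b=s} \), so the right-hand side above becomes \( \big(\mathscr{L}_y\{f(y,\partial_b;x)\exp(-xy)\}(s)\,\exp(x\partial_b)\big)\big\rvert_{b=s} \), exactly as the substitution \( a=-s-x,\,b=s+x \) was rewritten via \( \exp(-x\partial_a+x\partial_b) \) in \autoref{lem:op1} (here there is no \( a \)-substitution to shift, so only the single factor \( \exp(x\partial_b) \) appears).

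The final step is to absorb \( \exp(x\partial_b) \) into the Laplace transform — precisely the manoeuvre closing the proof of \autoref{lem:op1}. This is legitimate because \( \mathscr{L}_y\{\bullet\}(s) \) is defined coefficientwise in \( x \) and integrates only the variable \( y \), so it commutes with the formal operator \( \partial_b \); as in \autoref{lem:op1}, the identity holds directly term by term for formal power-series. Since \( \exp(-xy) \) and \( \exp(x\partial_b) \) act on disjoint variables they commute, and reordering them yields \( \mathscr{L}_y\{f(y,\partial_b;x)\exp(x\partial_b)\exp(-xy)\}(s)\big\rvert_{b=s} \), which is the claim. I do not expect any genuine obstacle: the only point requiring attention is tracking that the substitution shifts by \( b=s \rightsquigarrow b=s+x \) and therefore contributes exactly one translation factor \( \exp(x\partial_b) \), rather than the \( \exp(-x\partial_a+x\partial_b) \) of \autoref{lem:op1}.
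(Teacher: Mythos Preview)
Your proposal is correct and is precisely the analogous argument the paper has in mind: the paper gives no separate proof of \autoref{lem:op2}, stating only that it (and \autoref{lem:op3}) hold by proofs analogous to \autoref{lem:op1}. Your write-up tracks the three steps of that proof faithfully, with the expected simplification that only the single substitution \( b=s \) shifts, contributing the factor \( \exp(x\partial_b) \) in place of \( \exp(-x\partial_a + x\partial_b) \).
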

	
	\begin{Lem}\label{lem:op3}
		For \( f \in \mathbb{Q}[y]\llbracket x \rrbracket \), the following identity holds
		\begin{align*}
		\exp(x \partial_s) \circ \mathscr{L}_y\{f(y;x)\}(s) \rvert_{b=s} = \mathscr{L}_y\{ f(y;x) \exp(x \partial_b)\exp(-xy) \}(s)  \rvert_{b=s} \,.
		\end{align*}
	\end{Lem}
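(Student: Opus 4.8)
The proof follows the pattern of \autoref{lem:op1}, and is in fact simpler, since \( f \) depends on neither \( \partial_a \) nor (through a substitution) on \( a \); only the single variable \( b \) and the substitution \( \rvert_{b=s} \) have to be tracked. I would begin exactly as there: by \autoref{lem:laplace:stranslate} (with trivial \( \partial_a,\partial_b \)-dependence in its integrand),
\[
\exp(x\partial_s)\,\mathscr{L}_y\{f(y;x)\}(s) = \mathscr{L}_y\{f(y;x)\exp(-xy)\}(s) \,,
\]
which accounts for the action of \( \exp(x\partial_s) \) on the genuine Laplace variable. Since \( \exp(x\partial_s) \) also acts as the translation \( s\mapsto s+x \) (\autoref{lem:trans}), applying it on the left of \( \mathscr{L}_y\{f(y;x)\}(s)\rvert_{b=s} \) turns the substitution rule \( b=s \) into \( b=s+x \), so that altogether
\[
\exp(x\partial_s)\circ\mathscr{L}_y\{f(y;x)\}(s)\big\rvert_{b=s} = \mathscr{L}_y\{f(y;x)\exp(-xy)\}(s)\big\rvert_{b=s+x} \,.
\]

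I would then re-express the shifted substitution as a translation operator in \( b \): for any \( \Phi \) one has \( \Phi(b)\rvert_{b=s+x} = \big(\exp(x\partial_b)\circ\Phi(b)\big)\rvert_{b=s} \) by \autoref{lem:trans} (that \( \Phi = \mathscr{L}_y\{f(y;x)\exp(-xy)\}(s) \) happens not to involve \( b \) here is harmless). Finally, as in \autoref{lem:op1}, the operator \( \exp(x\partial_b) \) may be pulled inside \( \mathscr{L}_y\{\bullet\}(s) \), since the formal Laplace transform is defined coefficient-by-coefficient in \( x \) and is \( \mathbb{Q}[\partial_b] \)-linear on each coefficient; and since \( f(y;x) \), \( \exp(-xy) \) and \( \exp(x\partial_b) \) all commute as elements of \( \mathbb{Q}[\partial_b,y]\llbracket x\rrbracket \), they may be freely reordered to give \( \mathscr{L}_y\{f(y;x)\exp(x\partial_b)\exp(-xy)\}(s)\rvert_{b=s} \), which is the claimed identity. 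The proof of \autoref{lem:op2} is word-for-word identical, merely keeping the (commuting) \( \partial_b \)-dependence of \( f \) throughout.

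I do not anticipate any genuine obstacle; the only point needing care, exactly as in \autoref{lem:op1}, is the bookkeeping in the displayed step above, namely distinguishing the shift of the Laplace variable \( s \) (governed by \autoref{lem:laplace:stranslate}) from the shift of the value substituted for \( b \) (governed by \autoref{lem:trans}), and recalling that within \( \mathbb{Q}[\partial_b,y]\llbracket x\rrbracket \) the symbols \( \partial_b \) and \( y \) commute.
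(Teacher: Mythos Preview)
Your proposal is correct and follows exactly the approach the paper indicates: the paper states that \autoref{lem:op3} (and \autoref{lem:op2}) ``hold by analogous proofs to \autoref{lem:op1} above'' without giving further details, and your write-up spells out precisely that analogy---using \autoref{lem:laplace:stranslate} for the shift in the Laplace variable, \autoref{lem:trans} to convert the shifted substitution \( b=s+x \) into the translation operator \( \exp(x\partial_b) \), and the commutativity in \( \mathbb{Q}[\partial_b,y]\llbracket x\rrbracket \) to reorder the factors.
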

	
	\paragraph{\bf The power-series \( \Psi \), \( \Omega \):} Introduce the formal power series
	\[
	\Psi(z) \coloneqq \sum_{k=0}^\infty E_k(0) z^k 
	\]
	It also will be convenient to write 
	\begin{equation}\label{eqn:omega}
	\Omega(z) \coloneqq \Psi(z) + z \Psi'(z)\,,
	\end{equation}
	in the calculations below.
	We note that
	\[
	s \cdot \mathscr{L}_y\Big\{\frac{2}{1 + \exp( x y)}\Big\}(s)  = s \cdot \mathscr{L}_y\Big\{ \sum_{i=0}^\infty \frac{E_i(0)}{i!} (x y)^i \Big\}(s) = \sum_{i=0}^\infty E_i(0) \Big(\frac{x}{s}\Big)^i = \Psi\Big(\frac{x}{s}\Big) \,.
	\]
	Using the property \( \mathscr{L}_y\{ f(y) \exp(xy) \}(s) = \mathscr{L}_y\{ f(y) \}(s-x) \), which can be checked by directly (cf. \autoref{lem:op1}), we obtain
	\begin{align*}
	\frac{1}{s} & = \mathscr{L}_y\Big\{ \frac{1 + \exp(x y)}{2} \cdot \frac{2}{1 + \exp(xy)} \Big\}(s) \\
	& = \frac12\mathscr{L}_y\Big\{  \frac{2}{1 + \exp(xy)} \Big\}(s)
	+\frac12\mathscr{L}_y\Big\{ \frac{2}{2 + \exp(xy)} \Big\}(s-x) \\
	& = \frac1{2s}\Psi\Big(\frac{x}{s} \Big)
	+ \frac1{2(s-x)}\Psi\Big(\frac{x}{s-x} \Big)
	\end{align*}
	Setting \( s \mapsto s + x \), solving for \( \Psi\big( \frac{x}{s+x} \big) \), then differentiating (with respect to \( x \) or \( s \)), and solving for \( \Psi'\big( \frac{x}{s+x} \big) \), we obtain
	\begin{equation}\label{eqn:phi:trans}
	\left\{ 
	\begin{aligned}[c]
	\Psi\Big( \frac{x}{x+s} \Big) &= 2 - \frac{x+s}{s} \Psi\Big( \frac{x}{s} \Big) \,, \\
	\Psi'\Big( \frac{x}{x+s} \Big) &= - \Big(\frac{x+s}{s}\Big)^2 \Psi\Big( \frac{x}{s} \Big) - \Big(\frac{x+s}{s}\Big)^3 \Psi'\Big( \frac{x}{s} \Big) \,.
	\end{aligned}
	\right.
	\end{equation}
	(These identities can also be checked directly, using properties of the Euler polynomials.) \medskip
	
	We are now in a position to directly check the identity in \autoref{eqn:goal}.\smallskip
	
	\paragraph{\bf Left-hand side:} With the understanding of \( \exp(x\partial_s) \big\rvert_{s=b} \) from \autoref{lem:trans} , the left hand side of \autoref{eqn:goal} evaluates to
	\begin{align*}
	& \frac{f(x, b) - f(0, b)}{2b} + \frac{f(x, b+x) - f(0, b+x)}{2(b+x)}  \\
	&  \quad + \Big( {-} \frac{ f(x, -b) + f(x, b)}{2b} + \frac{ f(0, -b) + f(0, b) }{2b}  \Big) \\[2ex]
	&= \frac{f(0,-b) - f(x,-b)}{2b} +  \frac{f(x, b+x) - f(0, b+x)}{2(b+x)} \,.
	\end{align*}
	We need to show the right-hand side simplifies to this as well. \medskip
	
	\paragraph{\bf Operator \( \mathcal{L}_1 \circ \mathcal{D}_1 \):} 
	We split \( \mathcal{D}_1 \) into simpler pieces.  Put
	\begin{align*}
	\mathcal{D}_1^{[0]} & =  \frac{s}{2} \cdot \mathscr{L}_y\big\{ y \big\}(s) \big\rvert_{b=s} = \frac{1}{2s} \Big\rvert_{a=-s,b=s} \,, \\
	\mathcal{D}_1^{[1]} & = - \frac{s}{2} \cdot \mathscr{L}_y\Big\{
	\begin{aligned}[t]
	&  \partial_y \Big( y (\partial_a - \partial_b) \tanh\Big( \frac{x y}{2} \Big) \Big)  
	\Big\}(s) \Big\rvert_{a=-s,b=s} \,,
	\end{aligned} \\
	\mathcal{D}_1^{[2]} &= s \cdot \mathscr{L}_y\Big\{
	\begin{aligned}[t]
	& \frac{1}{1 + e^{-x(\partial_a - \partial_b + y)}} \Big( \frac{-1 + e^{-x y}}{x} e^{x \partial_b} + \big(1 - e^{-x(\partial_a - \partial_b)} \big) \frac{y}{1 + e^{x y}}\Big)  
	\Big\}(s) \Big\rvert_{a=-s,b=s} \,;
	\end{aligned}
	\end{align*}
	one checks with some straightforward trigonometric calculations that \( \mathcal{D}_1 = \mathcal{D}_1^{[0]} +  \mathcal{D}_1^{[1]} + \mathcal{D}_1^{[2]} \).  (Note that \( \mathcal{D}_1^{[2]} \) here comes from lines 2 and 3 in \autoref{eqn:d1}.)\medskip
	
	\paragraph{\em Case \( \mathcal{D}_1^{[0]} \):} Directly we compute
	\begin{align*}
	& \mathcal{L}_1 \circ \mathcal{D}_1^{[0]} \circ \big( b f(a+b,a) + a f(a+b,b) \big) \\
	& = (1 + \exp(x \partial_s)) \frac{1}{2s} \Big\rvert_{s=b} \circ \Big( \frac{1}{2s} (b f(a+b,a) + a f(a+b, b) ) \Big) \Big\rvert_{a=-s,b=s} \\
	& = (1 + \exp(x \partial_s)) \frac{1}{2s} \Big\rvert_{s=b} \circ \frac{f(0,-s) -  f(0, s)}{2} \\
	& = \frac{f(0,-b) -  f(0, b)}{4b} + \frac{f(0,-b-x) -  f(0, b+x)}{4(b+x)} \,.
	\end{align*}
	\medskip
	
	\paragraph{\em Case \( \mathcal{D}_1^{[1]} \):}	Now recalling
	\[	
	\tanh\Big( \frac{xy}{2} \Big) = 1 - \sum_{i=0}^\infty \frac{E_i(0)}{i!} \big( xy \big)^i \,,
	\]
	we find
	\[
	\mathscr{L}_y\Big\{\tanh\Big( \frac{x y}{2} \Big)\Big\}(s) = \frac{1}{s}  \sum_{i=1}^\infty E_i(0) \Big(\frac{x}{s}\Big)^i = \frac{1}{s} \Big(1  - \Psi\Big( \frac{x}{s} \Big) \! \Big) \,.
	\]
	Moreover, using the standard Laplace transform properties \( \mathscr{L}_y\{y f(y)\}(s) = -\partial_s \mathscr{L}_y\{ f(y) \}(s) \), and \( \mathscr{L}_y\{ \partial_y f(y)\}(s) = s \mathscr{L}_y\{ f(y) \}(s)  - f(0^-) \) (which transfer to formal power-series), we find
	\[
	-\frac{s}{2} \cdot \mathscr{L}_y\Big\{ \partial_y \Big( y \tanh\Big( \frac{x y}{2} \Big) \! \Big)\Big\}(s) = \frac{1}{2} \Big( - 1 + \underbrace{\Psi\Big( \frac{x}{s} \Big) + \frac{x}{s} \Psi'\Big( \frac{x}{s} \Big)}_{\text{$=\Omega(\tfrac{x}{s})$ from Eq. \eqref{eqn:omega}}} \! \Big) \,.
	\] 
	Reinstating the additional factor \( \partial_a - \partial_b \), which is present in \( \mathcal{D}_1^{[1]} \), and with the usual notation \( f^{(i,j)}(a,b) = \partial_a^i \partial_b^j f(a,b) \), we find
	\begin{align*}
	& \mathcal{L}_1 \circ \mathcal{D}_1^{[1]} \circ \big( b f(a+b,a) + a f(a+b,b) \big) \\
	& = (1 + \exp(x \partial_s)) \frac{1}{2s} \Big\rvert_{s=b} \circ \begin{aligned}[t] \frac{1}{2} \Big( & - 1 + \Omega\Big( \frac{x}{s} \Big) \! \Big) \\[-0.5ex]
	& \cdot (\partial_a - \partial_b) \big(b f(a+b,a) + a f(a+b, b) \big)  \Big\rvert_{a=-s,b=s} \end{aligned}
	\\
	& = (1 + \exp(x \partial_s)) \frac{1}{2s} \Big\rvert_{s=b} \circ \begin{aligned}[t] \frac{1}{2} \Big( & - 1 + \Omega\Big( \frac{x}{s} \Big) \! \Big) \\[-0.5ex]
	& \cdot \big( -f(0,-s) + f(0,s) + s f^{(0,1)}(0,-s) + s f^{(0,1)}(0,s)  \big) \end{aligned}
	\end{align*}
	Using \autoref{eqn:phi:trans}, one checks that
	\begin{equation}\label{eqn:phi:comb}
	\begin{aligned}[c]
	& \exp(x \partial_s) \frac{1}{2s} \circ \Big( - 1 + \Omega\Big( \frac{x}{s} \Big) \! \Big)
	= \frac{1}{2(s+x)} \Big( 1 - \Big( \frac{s+x}{s} \Big)^2 \Omega\Big( \frac{s}{x} \Big) \Big) \,.
	\end{aligned}
	\end{equation}
	So we find
	\begin{align*}
	& \mathcal{L}_1 \circ \mathcal{D}_1^{[1]} \circ \big( b f(a+b,a) + a f(a+b,b) \big) \\
	& = \begin{aligned}[t] 
	& \frac{1}{4} \Big( - 1 +  \Omega\Big( \frac{x}{b} \Big) \! \Big) \cdot \bigg\{ \frac{-f(0,-b) + f(0,b)}{b} + f^{(0,1)}(0,-b) + f^{(0,1)}(0,b)  \bigg\}
	\\
	& \begin{aligned}[t] {} + \frac{1}{4} \Big( 1 -  {} & \Big( \frac{b+x}{b} \Big)^2 \Omega\Big( \frac{x}{b} \Big) \! \Big) \\
	& \cdot \bigg\{ \frac{-f(0,-b-x) + f(0,b+x)}{b+x} + f^{(0,1)}(0,-b-x) + f^{(0,1)}(0,b+x)  \bigg\} \,. \end{aligned}
	\end{aligned}
	\end{align*}
	\medskip
	
	\paragraph{\em Case \( \mathcal{D}_1^{[2]} \):}
	Finally, for \( \mathcal{D}_1 \), we note that by \autoref{lem:op1}, we have
	\begin{align*}
	& (1 + \exp(x\partial_s)) \frac{1}{2s} \Big\rvert_{s=b} \circ \mathcal{D}_1^{[2]} \\
	& = \frac{1}{2} \mathscr{L}_y\Big\{   \frac{-1 + e^{-x y}}{x} e^{x \partial_b} + \big(1 - e^{-x(\partial_a - \partial_b)} \big)  \frac{y}{1 + e^{x y}}\Big)   \Big\}(s) \Big\rvert_{a=-b,s=b}  \\
	& = \frac{1}{2x} \Big( \frac{1}{s+x} - \frac{1}{s} \Big) e^{x \partial_b}  + \frac{1}{4s^2} \big(1 - e^{-x(\partial_a - \partial_b)} \big) \Omega\Big(\frac{x}{s}\Big) \Big\rvert_{a=-b,s=b} \,.
	\end{align*}
	So likewise, we have
	\begin{align*}
	& (1 + \exp(x\partial_s)) \frac{1}{2s} \Big\rvert_{s=b} \circ \mathcal{D}_1^{[2]} \circ \big(b f(a+b, a) + a f(a+b, b) \big) \\
	& = \begin{aligned}[t] 
	& \frac{1}{2x} \Big( \frac{1}{b+x} - \frac{1}{b} \Big) \big((b+x) f(x, -b) - b f(x, b+x) \big) \\
	& + \frac{1}{4b} \Omega\Big( \frac{x}{b} \Big) \big( f(0, -b) -  f(0, b) \big) 
	- \frac{b+x}{4b^2}  \Omega\Big( \frac{x}{b} \Big)
	\big(f(0, -b-x) - f(0, b+x) \big) 
	\end{aligned}
	\end{align*}
	We note already here that the \( \Omega \) terms in \( \mathcal{L}_1 \circ \mathcal{D}_1^{[2]} \) cancel with the \( \Omega(\bullet) f(\bullet)  \) terms in \( \mathcal{L}_1 \circ \mathcal{D}_1^{[1]} \).  (The remaining \( \Omega(\bullet) f^{(0,1)}(\bullet)  \) terms in \( \mathcal{L}_1 \circ \mathcal{D}_1^{[1]} \) will cancel with \( \mathcal{L}_1 \circ \mathcal{D}_3 \) below.) \medskip
	
	\paragraph{\bf Operator \( \mathcal{L}_1 \circ \mathcal{D}_2 \):} From \autoref{lem:op2}, we have that
	\begin{align*}
	(1 + \exp(x \partial_s)) \frac{1}{2s} \Big\rvert_{s=b} \circ \mathcal{D}_2 
	& = \frac{1}{2} \mathscr{L}_y\{ 1 - \exp(x(\partial_b - y))\} \Big\rvert_{a=-b,s=b} \\
	& = \frac{1}{2} \Big( \frac{1}{s} - \frac{1}{s+x} \exp(x \partial_b) \Big) \Big\rvert_{a=-b,s=b}
	\end{align*}
	So immediately, we find
	\[
	\mathcal{L}_1 \circ \mathcal{D}_2 \circ f(0,b) = \frac{f(0,b)}{2b} - \frac{f(0, b+x)}{2(b+x)} \,.
	\]
	
	\paragraph{\bf Operator \( \mathcal{L}_1 \circ \mathcal{D}_3 \):} Finally, we note that \( \mathcal{D}_3 \) can be rewritten as:
	\begin{align*}
	\mathcal{D}_3 &= s \cdot \mathscr{L}_y\Big\{ \partial_y \partial_x \, \frac{1}{2} x \tanh\Big(\frac{xy}{2}\Big)  \Big\}(s) \Big\rvert_{b=s}  \\
	& = s^2 \partial_x \frac{x}{2s} \Big( 1 - \Psi\Big( \frac{x}{s} \Big) \! \Big)  \Big\rvert_{b=s} \\
	& = \frac{s}{2} \Big( 1 - \Omega\Big( \frac{x}{s} \Big) \! \Big)  \Big\rvert_{b=s} \,.
	\end{align*}
	So we find (using again \autoref{eqn:phi:comb}),
	\begin{align*}
	&\mathcal{L}_1 \circ \mathcal{D}_3 \circ \partial_b\big( f(0, b) - f(0,-b) \big) \\
	& =  
	(1 + \exp(x\partial_s))\Big\rvert_{s=b} \circ 
	\frac{1}{4} \Big( 1 - \Omega\Big( \frac{x}{s} \Big)  \! \Big) \big( f^{(0,1)}(0, s) - f^{(0,1)}(0,-s) \big) 	\\[1ex]
	& =  
	\begin{aligned}[t]
	& \frac{1}{4} \Big( 1 - \Omega\Big( \frac{x}{b} \Big) \Big) \big\{ f^{(0,1)}(0, b) - f^{(0,1)}(0,-b) \big\} \\
	& + \frac{1}{4} \Big( -1 + \Big( \frac{b+x}{b} \Big)^2 \Omega\Big( \frac{x}{b} \Big) \! \Big) \big\{ f^{(0,1)}(0, b+x) - f^{(0,1)}(0,-x-b) \big\} \,. \end{aligned}
	\end{align*}
	We note that \( \mathcal{L}_1 \circ \mathcal{D}_3 \) cancels completely with the \( \Omega(\bullet) f^{(0,1)}(\bullet) \) terms in \( \mathcal{L}_1 \circ \mathcal{D}^{[1]}_1 \). \medskip
	
	\paragraph{\bf Right-hand side:}  Now sum all of the contributions above to \( \mathcal{L}_1 \circ \mathcal{D}_i \).  Recall that the \( \Omega \) terms in \( \mathcal{L}_1 \circ \mathcal{D}_1^{[2]} \) cancel with the \( \Omega(\bullet) f(\bullet)  \) terms in \( \mathcal{L}_1 \circ \mathcal{D}_1^{[1]} \), and that \( \mathcal{L}_1 \circ \mathcal{D}_3 \) cancels completely with the \( \Omega(\bullet) f^{(0,1)}(\bullet) \) terms in \( \mathcal{L}_1 \circ \mathcal{D}^{[1]}_1 \).  We are then left with the right-hand side being
	\begin{align}
	\tag{$\mathcal{D}_1^{[0]}$} 
	& \frac{f(0,-b) -  f(0, b)}{4b} + \frac{f(0,-b-x) -  f(0, b+x)}{4(b+x)}  \\
	\tag{$\mathcal{D}_1^{[1]}$} 
	& + \frac{1}{4} \big( - 1 \big) \cdot \bigg\{ \frac{-f(0,-b) + f(0,b)}{b}  \bigg\}  + \frac{1}{4} \big( 1  \big)  \cdot \bigg\{ \frac{-f(0,-b-x) + f(0,b+x)}{b+x} \bigg\} \\
	\tag{$\mathcal{D}_1^{[2]}$}
	& + \frac{1}{2x} \Big( \frac{1}{b+x} - \frac{1}{b} \Big) \big((b+x) f(x, -b) - b f(x, b+x) \big) \\
	\tag{$\mathcal{D}_2$}
	& + \frac{f(0,b)}{2b} - \frac{f(0, b+x)}{2(b+x)} 
	\end{align}
	This is readily seen to simplify (the last terms in lines 1 and 2 cancel, the first terms in lines 1 and 2 combine).  Overall we obtain
	\[
	= \frac{f(0,-b) - f(x,-b)}{2b} +  \frac{f(x, b+x) - f(0, b+x)}{2(b+x)} \,,
	\]
	which agrees with the left-hand side of \autoref{eqn:goal}.  So \autoref{eqn:goal} is proven, and by applying \( \mathcal{L}_2 \) to both sides, the main theorem is finally proven.

\end{document}